\def\bra{\langle}
\def\ket{\rangle}
\def\N{{\mathbb N}}
\def\R{{\mathbb R}}
\def\S{{\mathbb S}}
\def\cE{{\mathcal E}}
\def\cC{{\mathcal C}}
\def\eqnn{\begin{eqnarray*}}
\def\eeqnn{\end{eqnarray*}}
\def\eqn{\begin{eqnarray}}
\def\eeqn{\end{eqnarray}}
\theoremstyle{plain}
\newtheorem{theorem}{Theorem}[section]
\newtheorem{definition}[theorem]{Definition}
\newtheorem{lemma}[theorem]{Lemma}
\newtheorem{corollary}[theorem]{Corollary}
\theoremstyle{remark}
\newtheorem{remark}[theorem]{Remark}
\newtheorem*{thm*}{Theorem}
\theoremstyle{definition}
\newtheorem{defn}{Definition}[section]
\theoremstyle{remark}
\newtheorem{rem}{Remark}[section]
\numberwithin{equation}{section}
\begin{document}

\parskip=8pt
\parindent=0pt

\title[Propagation of $L^\infty$ exp tails]
{\bf{On  pointwise exponentially weighted estimates for the  Boltzmann equation}}

\author[I. M. Gamba] {Irene M. Gamba}
\address{I. M. Gamba,
Department of Mathematics, University of Texas at Austin.}
\email{gamba@math.utexas.edu}

\author[N. Pavlovi\'{c}]{Nata\v{s}a Pavlovi\'{c}}
\address{N. Pavlovi\'{c},  
Department of Mathematics, University of Texas at Austin.}
\email{natasa@math.utexas.edu}

\author[M. Taskovi\'{c}]{Maja Taskovi\'{c}}
\address{M. Taskovi\'{c},
Department of Mathematics, University of Pennsylvania.}
\email{taskovic@math.upenn.edu}

\begin{abstract}
In this paper we prove  a conditional result on the propagation in time of weighted  $L^\infty$  bounds for solutions to the non-cutoff homogeneous Boltzmann equation 
that satisfy propagation in time of weighted $L^1$ bounds. 
To emphasize the general structure of the result  
we express our main result using certain general weights.
We then apply it
to the cases of exponential and Mittag-Leffler weights, for which propagation in time of weighted $L^1$ bounds
is known to hold.
\end{abstract}

\maketitle


\section{Introduction} \label{sec-intro}

The space homogeneous Boltzmann equation 
 \eqn \label{intro-eq-be}
		\partial_t f(t,v) \, = \, Q(f,f)(t,v) 
\eeqn
with $t\in\R^+, v \in\R^d, d\geq 2$, is a mathematical model for the evolution of the probability density $f$ of independent identically distributed particles modeling a rarefied gas with predominantly binary elastic interactions. 
This evolution is governed by a quadratic non-local integral operator $Q(f,f)$ given by
\eqn \label{intro-eq-q}
	 Q(f,f)(t,v) \, = \, \int_{\R^d} \int_{S^{d-1}} \, \big( f' f'_* - f f_* \big) \, B(|u|, \hat{u}\cdot\sigma) \, d\sigma \, dv_*,
\eeqn
\normalsize
where  $f'=f(t,v')$, $f'_*=f(t,v'_*)$, $f_*=f(t,v_*)$, and relative velocity $u$ is given by $u= v-v_*$, with velocities satisfying
$$v'=\frac{v+v_*}{2} +\frac{|u|\sigma}{2}, \qquad 
v'_*=\frac{v+v_*}{2} -\frac{|u|\sigma}{2}.$$
The operator $Q(f,f)$ is called the collision operator,  and is endowed with a collision kernel $B$, modeling the rate of transition states before and after interactions.
Such kernel $B$, given by
\eqn\label{intro-eq-kernel}
	B(|u|, \hat{u}\cdot\sigma) \, = \, |u|^\gamma \, b(\hat{u} \cdot \sigma),
\eeqn
\normalsize
depends on a potential function of the relative speed of the interacting particles $|u|^\gamma$, and on the angle associated to these interactions $b(\hat{u} \cdot \sigma)$.
Hard potentials correspond to positive power growth ($\gamma >0$), while soft potentials correspond to negative ones ($\gamma<0$). 
The collision kernel may or may not be integrable with respect to the angle. The techniques that are needed for estimates in the two cases may differ significantly. In this manuscript we focus on the non-integrable angular kernel, referred to as the {\it non-cutoff} case. For details see Section \ref{sec:bol+mom}.

In this paper we study the propagation in time of weighted $L^\infty$ norms of solutions to the  spatially homogeneous Boltzmann equation in the non-cutoff case for hard potentials.
The stepping stone for this study are properties of the corresponding weighted $L^1$  norms, which have been obtained in the recent work of the authors with Alonso \cite{algapata15}. Since the solution of the Boltzmann equation is a probability distribution, one typically considers  polynomial or exponential weights. The motivation for considering the latter comes from the fact that a Gaussian in the velocity space, $M_\beta(v) = e^{-\beta|v|^2}$, is a stationary state of the Boltzmann equation. Such stationary states are called Maxwellians.

The analysis of $L^1$-weighted norms (called polynomial moments when the weight is a power function of velocity, and exponential moments if the weight function is an exponential) has been developing for several decades, see e.g. \cite{we96, we97, bo97, bogapa04, gapavi09, mo06, alcagamo13, lumo12, algapata15}.
We give more details about previous works on polynomial and exponential-type moments in Section 2. 
Having understood the evolution of  $L^1$-weighted norms of solutions to \eqref{intro-eq-be}-\eqref{intro-eq-kernel}, the natural question is to obtain information about weighted pointwise  bounds. 
The first result in this direction was achieved by Carleman in \cite{ca57}, which was later extended by Arkeryd in \cite{ar83}. Specifically in \cite{ar83} the author proves propagation of  $L^\infty$-polynomially weighted norms for solutions to the spatially homogeneous Boltzmann equation with bounded angular cross-section (which is a special case of the cutoff).  
The first result in the study of propagation of $L^\infty$-exponentially weighted  norms for solutions of the Boltzmann equation for hard potentials with bounded angular kernel was done by Gamba, Panferov and Villani in \cite{gapavi09}, where the authors proved that such solutions are controlled by a Gaussian $M_\beta(v)$ if the initial data is controlled by another Gaussian $M_{\tilde{\beta}}(v)$. 
As in the case of propagation in time of $L^\infty$-polynomialy weighted  norms \cite{ar83}, the propagation of  $L^\infty$-exponentially weighted norms established in \cite{gapavi09} relies on the propagation of $L^1$-exponentially weighted norms. The solutions considered in the works \cite{ar83} and \cite{gapavi09}, whose existence has been established in the earlier work \cite{ar72I}, are such that important physical quantities are controlled (mass and momentum are conserved, and  energy and entropy are bounded).

In view of recent results on the propagation of  $L^1$-exponentially or Mittag-Leffler weighted  norms of solutions in the non-cutoff case \cite{lumo12, algapata15}, a natural question arises as to whether one can propagate  pointwise exponentially weighted bounds in the non-cutoff setting, which is the question that has not been addressed so far and is the object of the study in this manuscript. 
Specifically, in this paper we show propagation in time of   $L^\infty$-{\it weighted}  norms for certain solutions to the homogeneous Boltzmann equation in a non-cutoff case.  The non-cutoff case itself requires a different treatment of the collision operator, since it cannot be split into the gain and loss terms, as is in the case of \cite{gapavi09}. We overcome this difficulty by introducing a new splitting of the collision integral, which is inspired by the splitting typically used in the non-cutoff case (see e.g \cite{aldeviwe00, vi02, si14}). However, our splitting takes into account the  weights.

In order to point out that our propagation in time of  $L^\infty$-weighted  norms relies on propagation in time of  $L^1$-weighted   norms, we  state the main result in terms of certain {\it general} weights, see Theorem \ref{thm-1}.  This can be understood in a spirit of an important step of the De-Giorgi-Nash-Moser type argument, in the sense that $L^{\infty}$ bounds depend on  $L^1$ bounds.
Consequently we 
apply the main theorem result to cases of exponential and Mittag-Leffler weights, for which propagation in time of $L^1$-weighted norms 
holds, see Corollary \ref{cor}. 

Some of the tools that we use to prove the main theorem are motivated by the regularity theory of integro-differential equations, brought to the context of the Boltzmann equation in the recent work of Silvestre \cite{si14}, where the author showed propagation in time of $L^\infty$ bounds, {\it without weights}, for classical solutions of the non-cutoff Boltzmann equation. Those are solutions that satisfy conservation of mass, momentum and energy, as well as entropy decay, whose existence has not been established yet.

Our result on propagation of $L^\infty$-weighted norms of solutions to the homogeneous non-cutoff Boltzmann equation that propagate $L^1$-weighted norms is conditional.  Namely, although the concept of weak solutions  is sufficient to prove propagation of $L^1$-polynomially or exponentially weighted norms \cite{we97,  algapata15}, in this paper we had to work with a stronger type of a solution whose existence is not yet established.  We prove a priori results for such solutions. For more details, see Theorem \ref{thm-1} and remarks afterwards.

{\bf Organization of the paper:} 
In Section \ref{sec:bol+mom}
we recall the Boltzmann equation, weak solutions, and existing results on polynomial and exponential moments. In Section \ref{sec:main}
we give precise statement of our main result in Theorem \ref{thm-1} and an application in Corollary \ref{cor}. In Section \ref{sec:prev} we review previous results on $L^\infty$ bounds which will be relevant for the proof of our main result. In Section \ref{sec:proof} we give a proof 
of Theorem \ref{thm-1}. Section \ref{sec:ex} provides examples of weight functions that can be used in Theorem \ref{thm-1}, as well as the proof of Corollary \ref{cor}.

{\bf{Acknowledgements.}} 
This work of I.M.G. has been supported by NSF grants DMS-1413064 and NSF-DMS-RNMS-1107465, the one of N.P. has been supported by NSF grants DMS-1516228 and DMS-1440140 and the one of M.T.  has been supported by
NSF grants DMS-1413064, NSF-DMS-RNMS-1107465 and DMS-1516228. 
The authors also gratefully acknowledge support from the Institute of Computational Engineering and Sciences (ICES) at The University of Texas at Austin and the Mathematical Sciences Research Institute (MSRI) in Berkeley, California.

\section{The Boltzmann equation and its moments} \label{sec:bol+mom}

\subsection{The Boltzmann equation}

We consider the Cauchy problem for the spatially homogeneous Boltzmann equation
 \eqn \label{eq-be}
\left\{
		\begin{array}{l}
			\partial_t f(t,v) \, = \, Q(f,f)(t,v),  \quad t\in\R^+, v\in\R^d, \quad d\geq 2 \vspace{5pt}\\ 
			f(0,v) \, = \, f_0(v).
		\end{array}
	\right. 
\eeqn
\normalsize
for time $t\in\R^+$ and velocity $v\in\R^d$. It is an evolution equation of the density $f(t,v)$ of particles in a rarefied gas. The operator $Q(f,f)$, called the collisional operator, measures the change of $f$ due to instantaneous binary collisions of particles. It is a quadratic integral operator defined via
\eqn \label{eq-q}
	 Q(f,f)(x,t,v) \, = \, \int_{\R^d} \int_{S^{d-1}} \, \big( f' f'_* - f f_* \big) \, B(|u|, \hat{u}\cdot\sigma) \, d\sigma \, dv_*.
\eeqn
\normalsize

We employ abbreviated notation $f'=f(x,t,v')$, $f'_*=f(x,t,v'_*)$, $f_*=f(x,t,v_*)$  often used in the context of Boltzmann equation.

\begin{wrapfigure}{r}{0.35 \textwidth}
\setlength{\unitlength}{1cm}
\begin{tikzpicture}[scale=0.4]
\centering
\begin{scope}
   	\draw (0,0) circle (4);
	\draw [->] (-4,0)-- (4,0);
	\node at (-4.5, -0.5) {$v_*$};
	
	\node at (3.7, 0.3)  {$u$};
	\node at (2.4, 2.5)  {$u'$};
	
	\draw [->] [thick] (0,0)--(2,0);
	\node at (2, .4)  {$\hat{u}$};
	\node at (4.2, -0.5) {$v$};
	\node at (3.5, 2.8) {$ v'$};

	\draw [->] (-3,-2.6) -- (3,2.6);
	\node at (-3.3, -3.3)  {$v'_*$};

	\draw [->] [thick] (0,0)--(1.5, 1.3);
	\node at (1.5, 1.6)  {$\boldsymbol{\sigma}$};

	\draw[->] [dashed] (2,-6)--(4,0);
	\draw[->] [dashed] (2,-6)--(-4,0);
	\draw[->] [dashed] (2,-6)--(-3,-2.6);
	\draw[->] [dashed] (2,-6)--(3,2.6);
	\node at (2, -6.5) {O};
\end{scope}
\centering
\end{tikzpicture}
\vspace{-0.5in}
\end{wrapfigure}

Vectors $v', v'_* \in \R^d$ stand for velocities of a pair of particles before the collision, while $v, v_* \in \R^d$ denote corresponding post-collisional velocities. Due to the conservation of momentum ($v+v_* = v' + v'_*$) and energy ($|v|^2 + |v_*|^2 = |v'|^2 + |v'_*|^2$), these pairs of velocities are related via
\begin{flalign*}
	& v' \, = \,  \frac{v+v_*}{2} \, + \,  \frac{|v-v_*|}{2}\; \sigma, \\
	& v'_* \, = \, \frac{v+v_*}{2} \, - \,  \frac{|v-v_*|}{2} \; \sigma, 
\end{flalign*}
\normalsize
where $\sigma  \in \S^{d-1}$ is the unit vector in the direction of the pre-collisional relative velocity $u' = v' -v'_*$.
The relative post-collisional velocity is denoted by $u=v-v_*$, and the unit vector with the same direction by $\hat{u} :=u/|u|$.

Most important information about collisions is encoded in the collisional kernel $B(|u|, \hat{u}\cdot\sigma)$, assumed to  take the factorized form that separates kinetic and angular parts
\eqn\label{eq-kernel}
	B(|u|, \hat{u}\cdot\sigma) \, = \, |u|^\gamma \, b(\hat{u} \cdot \sigma).
\eeqn
\normalsize
We write $\hat{u} \cdot \sigma = \cos\theta$, where $\theta \in [0,\pi]$ is the angle between the pre and post collisional relative velocities (see the Figure 1). With an abuse of notation, we also denote the kernel $B(|u|, \hat{u}\cdot\sigma)$ as $B(|u|, \theta)$. 

In this manuscript, we study the variable hard potentials case
\begin{align}\label{eq-gamma}
0<\gamma\leq 1.
\end{align}

In many models, the angular kernel  $b(\hat{u} \cdot \sigma)$, which is a positive measure over the sphere $\S^{d-1}$, is not integrable. However, since Grad's work \cite{gr63} in 1963, integrability is often assumed 
to simplify the analysis of the collisional operator since under this assumption the operator $Q$ can be split into the gain $Q^+$ and loss $Q^-$ terms, which then can be analyzed separately. More precisely, 
in that case one has: 
\begin{align} \label{Q-split}
Q(f,f)\, 
& =\, Q^+(f,f) - Q^-(f,f) \\ \nonumber
& : =  \int_{\R^d} \int_{S^{d-1}} \, f'  f'_*  \, 
	B(|u|, \hat{u}\cdot\sigma) \, d\sigma \, dv_* 
	- f\, \int_{\R^d} \int_{S^{d-1}} \,  f_* \, 
	B(|u|, \hat{u}\cdot\sigma) \, d\sigma \, dv_*.
\end{align}
The hope was that removing the singularity of the angular kernel should not affect properties of the equation. However, it has been observed recently (e.g. \cite{li94}, \cite{de95}, \cite{dego00}, \cite{dewe04}) that the singularity of $b(\cos\theta)$ carries a regularization. This, and the analytical challenge, motivated further study of the non-cutoff regime, which is the setting we consider here.

More precisely, inspired by the inverse power law model, in this paper we consider the following non-cutoff model with
\begin{equation}\label{b-pw}
b(\cos{\theta}) \approx  (\sin{\theta})^{-(d-1) - \nu},  \quad \mbox{with} \;\nu \in (0,2).
\end{equation}
The symbol $a \approx b$  is understood in the following sense: there are universal constants $c_1, c_2$ so that $c_1 \, b \le a \le c_2 \, b$. For the range of $\nu$ under consideration, i.e.  $\nu \in (0,2)$, the function $b(\cos\theta)$ indeed is not integrable over the unit sphere (it is integrable for $\nu<0$).  However, if it is weighted with $(\sin\theta)^{\nu+}$, it becomes integrable.

\begin{remark}
In the particular case of the inverse power law model in $3$ dimensions, parameters $\gamma$ and $\nu$ have the following formulas
\begin{align}\label{ipl}
\gamma = \frac{s - 5}{s-1}, \quad \nu = \frac{2}{s-1},
\end{align}
where $s$ is a parameter strictly larger than $2$. Note that, indeed, in this model $\nu \in (0,2)$. In addition, variable hard potentials correspond to $s>5$, which implies $\nu < \tfrac{1}{2}$.

\end{remark}

Due to symmetries of the collisional kernel $Q(f,f)$, its value remains the same if $B$ is replaced with $\tilde{B}$, provided that
$$
B(|u|, \theta) \, + \, B(|u|, \theta + \pi) \, = \, \tilde{B}(|u|, \theta) \, + \,\tilde{ B}(|u|, \theta + \pi).
$$
In the case when both $B$ and $\tilde{B}$ are factorized, i.e.  $B(|u|, \theta) = |u|^\gamma \, b(\theta)$  and $\tilde{B}(|u|, \theta) = |u|^\gamma \, \tilde{b}(\theta)$ with the same parameter $\gamma$, then this condition reduces to 
\begin{align}\label{bcond}
b(\theta) \, + \, b(\theta + \pi) \, = \, \tilde{b}(\theta) \, + \, \tilde{b}(\theta + \pi).
\end{align}
Given $b(\theta)$ as in \eqref{b-pw}, there are many ways to construct $\tilde{b}$ that satisfies \eqref{bcond}. A frequent choice is to set
\begin{align*}
\tilde{b}(\cos\theta) = 
\left\{
	\begin{array}{ll}
			2b(\cos\theta) , \quad &\mbox{if} \,\, \cos\theta>0\vspace{5pt}\\ 
			 0, \quad &\mbox{if} \,\, \cos\theta<0,
	\end{array}
	\right. 
\end{align*}
thus reducing the support of the angular kernel to the right half of the sphere. In this manuscript, however, we will use the following behavior on half spheres, as was the case in \cite{si14} 
\begin{align}\label{eq-b}
\tilde{b}(\cos\theta) \approx 
\left\{
	\begin{array}{ll}
			 |\sin\theta|^{-(d-1)-\nu}, \quad &\mbox{if} \,\, \cos\theta>0\vspace{5pt}\\ 
			 |\sin\theta|^{1 + \gamma +\nu}, \quad &\mbox{if} \,\, \cos\theta<0.
	\end{array}
	\right. 
\end{align}
This particular choice is tailored for the proof of Lemma \ref{si2}. We provide more details about this in Appendix \ref{ap-b}. 
From now on we will abuse the notation and write 
$b(\cos\theta)$ instead of   $\tilde{b}(\cos\theta)$.


\subsection{Weak solutions}

In this section, we recall the definition of a weak solution, whose existence in three dimensions and in the non-cutoff case \eqref{eq-nc} with $\beta \in (0,2]$ is proved in \cite{ar81, vi98, go97}. For more existence results in the non-cutoff regime, see for example \cite{vi98, alvi02, lumo12, uk84, grst11, amuxy11, amuxy12}
\begin{defn}\label{defn-weak}
Let $f_0 \geq 0$ be a function defined in $\R^d$ with finite mass, energy and entropy
\begin{align}\label{eq-entropy}
\int_{\R^d} f_0(v) \, \left( 1 + |v|^2 + \log(1+f_0(v))\right) \, dv < +\infty.
\end{align}
Then we say $f$ is a {\it weak solution} to the Cauchy problem \eqref{eq-be} if it satisfies the following conditions:
\begin{itemize}
\item $f\geq 0, \; f \in C(\R^+; \mathcal{D}'(\R^d)) \, \cap \, L^1([0,T]; L^1_{2+\max\{\gamma,0\}})$
\item $f(0,v) = f_0(v)$
\item $\forall t\geq 0$: \; $\int f(t,v) \psi(v) dv = \int f_0(v) \psi(v) dv$, for $\psi(v) = 1, v_1, ..., v_d, |v|^2$
\item $f(t,\cdot) \in L\log L$ and $ \forall t\geq 0: \, \int f(t,v) \log f(t,v) dv \leq \int f_0(v) \log f_0 dv$
\item $\forall \phi(t,v) \in C^1(\R^+, C^\infty_0(\R^3))$, $\forall t \geq 0$ we have that 
\begin{align*}
\int_{\R^d} f(t,v) \phi(t,v) dv \,  - \, \int_{\R^d} f_0(v) \phi(0,v) dv 
\,& - \, \int_0^t d\tau \int_{\R^d} f(\tau, v) \partial_\tau \phi(\tau,v) dv\\
& = \int_0^t d\tau \int_{\R^d} Q(f,f)(\tau,v) \phi(\tau,v) dv.
\end{align*}
\end{itemize}

\end{defn}

\subsection{Polynomial and exponential moments} As announced in the introduction, our main result on  the propagation in time of  weighted $L^\infty$ bounds is achieved by exploiting propagation of the corresponding   weighted $L^1$ bounds.  In this section, we recall what is known at the level of  weighted $L^1$ bounds. Before we review results in this direction, we recall the definition of polynomial and exponential moments.

Solutions to the Boltzmann equation are probability density functions  $f(t,v)$. Therefore their polynomially weighted  $L^1$ norms, i.e. its statistical moments or observables, play a significant role for further studies of the solution behavior. One can also study more general moments. Since the equilibrium state of the Boltzmann equation is a Maxwellian distribution, i.e. a Gaussian distribution in velocity space, we are particularly interested in the study of the so-called exponential moments, i.e. exponentially weighted $L^1$ norms.

\begin{definition}[Polynomial and exponential moments]
 \label{def-mom}
Polynomial moment of order $q$ of a function $f(t,v)$ is defined via
\begin{align}
	& m_q(t) \, := \, \int_{\R^d} f(t,v) \;  \bra v \ket^q \; d(v), \label{def-polymom}
	\end{align}
Exponential moment of order $s$ and rate $\alpha$ of a function $f(t,v)$ is defined by
\begin{align}
\mathcal{M}_{\alpha, s}(t) := \int_{\R^d}  f(t,v) \; e^{\alpha \, \bra v \ket^s} \; dv. \label{def-expmom}
\end{align}

\end{definition} 

In an extensive work  including e.g. 
\cite{el83,de93, we96, we97,miwe99} 
generation of polynomial moments 
\begin{align}\tag{G-poly-1}\label{eq-g}
& \int_{\R^d}  f(0,v) \; \langle v \rangle^2\; dv < C_0, \\
& \qquad \Rightarrow \qquad
\forall q>2, \; \forall t_0>0, \; \exists C>0, \; \forall t\geq t_0: \;\; \int_{\R^d}  f(t,v) \; |v|^q \; dv < C. \nonumber
\end{align}
and propagation of polynomial moments 
\begin{align}\tag{P-poly-1}\label{eq-p}
& \int_{\R^d} f(0,v) \; |v|^q dv < C_0 
\;\; \mbox{for some} \;\; q>0\\
& \qquad \Rightarrow \qquad
\exists C>0, \;\; \forall t\geq 0: \;\; \int_{\R^d}  f(t,v) \; |v|^q \; dv < C. \nonumber
\end{align}
was shown, both for Grad's cutoff and the non-cutoff case.

Propagation of exponential moments
 \begin{align}\tag{P-exp-1}\label{eq-p}
& \int_{\R^d} f(0,v) \; e^{\alpha_0 \, \bra v \ket^s} dv < C_0 
\;\; \mbox{for some} \;\; \alpha_0, s>0\\
& \qquad \Rightarrow \qquad
\exists C>0, \;\; \exists  0<\alpha \leq \alpha_0, \;\; \forall t\geq 0: \;\; \int_{\R^d}  f(t,v) \; e^{\alpha \, \bra v \ket^s} \; dv < C. \nonumber
\end{align}
and generation of exponential moments 
\begin{align}\tag{G-exp-1}\label{eq-g}
& \int_{\R^d}  f(0,v) \; \langle v \rangle^q\; dv < C_0, \;\;\mbox{for some} \;\; q>2 \\
& \qquad \Rightarrow \qquad
\exists s, \alpha,  C>0, \;\; \forall t>0: \;\; \int_{\R^d}  f(t,v) \; e^{\alpha \, \bra v \ket^s} \; dv < C. \nonumber
\end{align}
was studied later, first under the Grad's cutoff assumption. The study was initiated by Bobylev \cite{bo84, bo97}, where the fundamental connection with polynomial moments was exploited. Namely, Taylor series expansion of $e^{\alpha \langle v \rangle^s}$ yields the following representation of exponential moments as an infinite sum of renormalized polynomial moments 
\eqn\label{def-expmomsum}
	\mathcal{M}_{\alpha, s}(t) \; = \;\sum_{q=0}^\infty \frac{m_{qs}(t) \; \alpha^q}{q!}.
\eeqn 
This was further developed for example by Bobylev, Gamba, Panferov in \cite{bogapa04}, Gamba, Panferov, Villani in \cite{gapavi09}, and Mouhot \cite{mo06}. 
All these papers used a technique based on 
establishing a term-wise geometric decay for terms in \eqref{def-expmomsum}.
Recently a new type of proof was developed in  the work of Alonso, Canizo, Gamba, Mouhot \cite{alcagamo13},  where estimates on the partial sums corresponding to  \eqref{def-expmomsum} were obtained. On the other hand, the non-cutoff case in the context of exponential moments was considered only recently by Lu and Mouhot \cite{lumo12}, where the  authors established the generation of exponential moments up to the order $s \in (0, \gamma]$, by implementing the
term-by-term method:

Recently, the authors of this paper together with Alonso \cite{algapata15} extended the result \cite{lumo12} to the exponential tails of order higher that $\gamma$, 
by implementing the partial sum method of \cite{alcagamo13} in the non-cutoff setting. 
To exploit decay of certain sums of Beta functions, our calculations led to expressions  similar to \eqref{def-expmomsum}, which in place of $q!$ have $\Gamma (aq + 1)$, with non-integer $a>1$. These new sums are associated to Mittag-Leffler functions, which are a generalization of the Taylor expansion of the exponential function. They are defined for some parameter $a>0$ by
\eqn\label{def-ml}
	\cE_a (x) := \sum_{q=0}^\infty \frac{x^q}{\Gamma(aq+1)}.\label{eq-mlfc}
\eeqn
\normalsize
It is well known (see e.g. \cite{emot53}) that the Mittag-Leffler function $\cE_a$
asymptotically behaves like an exponential function of order $1/a$ 
\eqn\label{eq-asym}
 \cE_{2/s} (\alpha^{2/s} \, x^2) \sim  e^{\alpha \, x^s}, \qquad \mbox{for} \,  x \rightarrow \infty,
\eeqn
and consequently, for some constants $c, C$ that depend on $\alpha, s$
\begin{align}\label{eq-asym1}
 c \, e^{\alpha \, x^s} \, \le \,\cE_{2/s} (\alpha^{2/s} \, x^2) \, \le \, C \, e^{\alpha \, x^s}.
\end{align}

This motivated our definition of Mittag-Leffler moments in \cite{algapata15}, whose finiteness still describes exponential tail behavior in $L^1$ sense.

\begin{definition}[Exponential and Mittag-Leffler moments] \label{def-mom-ML} 
Mittag-Leffler moment of  order $s$ and rate $\alpha>0$ of a function $f$ is introduced via
\eqn
	\displaystyle\int_{\R^d}  f(t,v) \;\; \cE_{2/s} (\alpha^{2/s} \, \bra v \ket^2 ) \; dv.
\eeqn
\normalsize
\end{definition}

Before we give the precise statement of the result in \cite{algapata15}, we give the condition on the angular kernel empoyed in \cite{algapata15}
\begin{align}\label{eq-nc} 
\int_{\S^{d-1}} b(\hat{u} \cdot \sigma) \sin^{\beta}{\theta}  \; &d\sigma = V_{d-2}  \int_0^\pi b(\cos{\theta})   \; \sin^{ \beta}{\theta} \; \sin^{d-2}{\theta}\; d\theta < \infty,
\end{align}
\normalsize
for some $\beta \in (0,2]$.  Here $V_{d-2}= \frac {\pi^{(d-2)/2}}{\Gamma((d-1)/2)}$ is the volume of the $d-2$ dimensional unit sphere. Note that the cross section  \eqref{b-pw} that is considered in this paper satisfies \eqref{eq-nc} with any $\beta > \nu$. 
In what follows we also use the notation
  $$L^1_k = \{f \in L^1(\R^d): \int_{\R^d} f \langle v \rangle^{k} dv <\infty\}.$$
Now, we are ready to recall the main result from the earlier work of authors with Alonso \cite{algapata15}, which will be used in the application of the main result of this paper (see Corollary \ref{cor}).

\begin{theorem}[Generation and Propagation of Mittag-Leffler moments]\label{thm}
Suppose $f$ is a solution to the Boltzmann equation \eqref{eq-be} associated to the  initial data $f_0 \in L^1_{2}$. Suppose the collision kernel is of the form \eqref{eq-kernel} with $0 < \gamma \leq 1$.
\vspace{-0,1in}
\begin{itemize}
\item[(a)] (Generation of exponential moments) If  the angular kernel satisfies the non-cutoff condition \eqref{eq-nc} with $\beta = 2$, then the exponential moment of order $\gamma$ is generated with a rate $r(t)={\alpha \, \min\{t,1\}}$. More precisely, there are positive constants $C, \alpha$, depending only on $b$, $\gamma$ and initial mass and energy, such that
\begin{equation}\label{eq-gener}
	\int_{\R^d}  \, f(t,v) \; e^{\alpha \, \min\{t,1\} \, |v|^\gamma} \, dv \, \leq \, C, \quad \mbox{for} \;\; t\geq 0.
\end{equation}
\item[(b)] (Propagation of Mittag-Leffler moments) Let $s \in (0, 2)$ and suppose that the Mittag-Leffler moment of order $s$ of the initial data $f_0$ is finite with a rate $r=\alpha_0$, that is,
\begin{equation}\label{eq-id}
	\displaystyle\int_{\R^d}  f_0(v) \;\; \cE_{2/s} (\alpha_0^{2/s} \, \bra v \ket^2 ) \; dv < M_0. 
\end{equation}
Suppose also that the angular cross-section satisfies assumption 
\begin{align}\label{s_condition}
	& \mbox{with} \;\; \beta=2,    &  \mbox{if} \;\;s\in (0, 1] \nonumber \\
	&\mbox{with} \;\; \beta = \frac{4}{s}-2,   & \mbox{if} \;\; s\in (1, 2).
\end{align}
Then, there exist positive constants $C, \alpha$, depending only on $M_0$, $\alpha_0$, $b$, $\gamma$ and initial mass and energy such that the Mittag-Leffler moment of order $s$ and rate $r(t) =\alpha$ remains uniformly bounded in time, that is 
\begin{equation}\label{eq-prop}
	\displaystyle\int_{\R^d}  f(t,v) \;\; \cE_{2/s} (\alpha^{2/s} \, \bra v \ket^2 ) \; dv < C, \quad \mbox{for} \; \; t\geq 0.
\end{equation}
\end{itemize}
\end{theorem}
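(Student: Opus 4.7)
The plan is to reduce everything to polynomial moments and exploit the series representations
\[
\mathcal{E}_{2/s}\bigl(\alpha^{2/s}\langle v\rangle^2\bigr) \;=\; \sum_{q=0}^{\infty} \frac{\alpha^{2q/s}\,\langle v\rangle^{2q}}{\Gamma(2q/s+1)},
\qquad
e^{\alpha \langle v\rangle^\gamma} \;=\; \sum_{q=0}^{\infty} \frac{\alpha^{q}\,\langle v\rangle^{q\gamma}}{q!}.
\]
First I would test the Boltzmann equation (in weak form) against $\langle v\rangle^{2q}$. Using the pre-post collisional symmetry and a non-cutoff version of the Povzner angular-averaging lemma, one obtains an ODI for the polynomial moments $m_{2q}(t)$ of the form
\[
\frac{d}{dt} m_{2q}(t) \;\le\; -\,K_q\, m_{2q+\gamma}(t) \;+\; \sum_{k=1}^{\lfloor q/2\rfloor} \binom{q}{k}\, \mathcal{A}_{k,q-k}\; m_{2k+\gamma}(t)\, m_{2(q-k)}(t),
\]
where the constant $K_q$ is coercive (coming from the loss part after the cancellation) and $\mathcal{A}_{k,q-k}$ is controlled by a Beta-type integral against the angular kernel. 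The exponent $\beta$ in \eqref{eq-nc} enters here precisely: for $s\in(0,1]$ one needs $\beta=2$ so that $\mathcal{A}_{k,q-k}\lesssim B(k+1,q-k+1)$, while for $s\in(1,2)$ the choice $\beta=4/s-2$ produces the decay $\mathcal{A}_{k,q-k}\lesssim B(k+\tfrac{2}{s},q-k+\tfrac{2}{s})$ that is tailored to $\Gamma(2q/s+1)$ rather than to $q!$. Interpolation ($m_{2q+\gamma}\ge m_{2q}^{1+\gamma/(2q)}m_0^{-\gamma/(2q)}$) turns the loss term into a self-dissipation.

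Next I would truncate the exponential/Mittag-Leffler series at order $N$, defining the partial sums
\[
E^\alpha_N(t) \;:=\; \sum_{q=1}^{N} \frac{\alpha^{2q/s}\,m_{2q}(t)}{\Gamma(2q/s+1)},
\qquad
\widetilde E^\alpha_N(t) \;:=\; \sum_{q=1}^{N} \frac{\alpha^{q}\,m_{q\gamma}(t)}{q!}.
\]
Multiplying the ODI by the corresponding weights and summing, the double sum in the production term is reorganized via a discrete convolution. The crucial algebraic fact is that the normalized Beta coefficients satisfy
\[
\sum_{k=1}^{q-1} \binom{q}{k}\, \frac{\mathcal{A}_{k,q-k}}{\Gamma(2q/s+1)}
\;\le\; \eta_q\cdot \Bigl[\tfrac{1}{\Gamma(2k/s+1)\Gamma(2(q-k)/s+1)}\Bigr],
\]
with $\eta_q\to 0$; after summation this produces $(E^\alpha_N)^2$ (or $\widetilde E^\alpha_N\cdot $ first moment) with a small prefactor, while the loss term yields a $-\delta\, \alpha^{-\gamma/\cdot}\,E^\alpha_{N+\gamma/\cdot}$ that absorbs both the production and the finite-order remainder for $\alpha$ chosen sufficiently small. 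This yields an $N$-uniform differential inequality
\[
\frac{d}{dt} E^\alpha_N(t) \;\le\; C - \delta \, E^\alpha_N(t),
\]
provided $\alpha$ is below an explicit threshold depending only on mass, energy and $b$. Passing to $N\to\infty$ by monotone convergence gives propagation of the Mittag-Leffler moment.

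For part (a), the generation statement, I would use the fact that polynomial moments of every order are \emph{generated} in finite time from $L^1_2$ data (a classical result proved also in the non-cutoff case). Therefore instead of a constant rate $\alpha$ one uses the time-dependent rate $\alpha(t)=\alpha\min\{t,1\}$, so that $\alpha(t)^q$ absorbs the $q$-dependent constants coming from the generation estimate $m_{q\gamma}(t)\le C_q t^{-q}$ valid for $t\in(0,1]$. The series then converges uniformly on $[0,1]$ with rate $\alpha$, and once the full exponential moment is finite at $t=1$ one invokes part (b) (with $s=\gamma<1$, which only requires $\beta=2$) to keep it bounded for all later times.

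The main obstacle is the angular/Povzner step in the non-cutoff regime: since $Q^+$ and $Q^-$ cannot be separated, one must work with the full integrand $f'f'_*-ff_*$, Taylor-expand $\langle v'\rangle^{2q}+\langle v'_*\rangle^{2q}-\langle v\rangle^{2q}-\langle v_*\rangle^{2q}$ to second order (so the non-integrable singularity $\sin^{-(d-1)-\nu}\theta$ is tamed by $\sin^2\theta$), and then track the resulting Beta-function coefficients carefully enough to identify the sharp exponent $\beta$ of \eqref{s_condition}. Matching these coefficients with $\Gamma(2q/s+1)$ rather than $q!$ is what forces the Mittag-Leffler framework whenever $s>\gamma$, and is what the bookkeeping in \cite{algapata15} achieves.
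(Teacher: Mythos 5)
The theorem you are asked to prove is not actually proved in this paper: it is imported verbatim as a known result from the earlier work of the authors with Alonso, \cite{algapata15}. The paper's Section~2.3 only \emph{recalls} it and sketches the idea in one sentence --- the partial-sum method of \cite{alcagamo13} adapted to the non-cutoff setting, with the key observation that the Beta-function decay produces $\Gamma(aq+1)$ rather than $q!$ in the denominators, hence Mittag-Leffler rather than exponential tails. So there is no ``paper's own proof'' for you to be measured against here; what can be said is that your sketch is consistent with that description, and the main structural components are in the right place: the series representation in terms of polynomial moments, a Povzner-type ODI obtained after a second-order Taylor cancellation to tame the $\sin^{-(d-1)-\nu}\theta$ singularity, the appearance of normalized Beta coefficients whose decay rate is calibrated by the choice of $\beta$ in \eqref{eq-nc}, the partial-sum ODI $\frac{d}{dt}E^\alpha_N \le C - \delta E^\alpha_N$ uniformly in $N$, and the time-dependent rate $\alpha\min\{t,1\}$ in part (a).

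That said, a few details in your write-up are imprecise and would not survive as stated. The moment ODI should carry the symmetrized production $\binom{q}{k}\bigl(m_{2k+\gamma}m_{2(q-k)}+m_{2k}m_{2(q-k)+\gamma}\bigr)$; dropping one of the two terms loses the cancellation structure that makes the Beta-sum converge. Your ``crucial algebraic fact'' is ill-posed as written: the left-hand side is summed over $k$ while the right-hand side still depends on $k$ --- what one actually needs is a term-by-term estimate $\binom{q}{k}\,\mathcal{A}_{k,q-k}\,\frac{\Gamma(2k/s+1)\Gamma(2(q-k)/s+1)}{\Gamma(2q/s+1)}\le c\,B\bigl(\tfrac{2k}{s}+1,\tfrac{2(q-k)}{s}+1\bigr)$ followed by summability of these Beta values in $k$ with a $q$-uniform bound, and this is exactly where the threshold $\beta=\tfrac{4}{s}-2$ for $s>1$ comes from. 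Finally, the generation estimate $m_{q\gamma}(t)\le C_q\,t^{-q}$ has the wrong scaling; the standard non-cutoff generation gives $m_{q\gamma}(t)\le C_q\bigl(1+t^{-(q\gamma-2)_+/\gamma}\bigr)$, and it is this exponent that makes $\alpha(t)^q = (\alpha\min\{t,1\})^q$ absorb the singularity after the Beta-function resummation. With these corrections your outline reproduces the [algapata15] argument.
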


\begin{remark}
Since Mittag-Leffler function asymptotically behaves like an exponential function \eqref{eq-asym1}, finiteness of exponential moment of order $s$ is equivalent to finiteness of the corresponding Mittag-Leffler moments. Hence, in fact, classical exponential moments are propagated in time too.
\end{remark}


\section{The main result}\label{sec:main}

In this section we state our main result - the propagation in time of certain  weighted $L^{\infty}$ bounds of solutions to the homogeneous Boltzmann equation in the non-cutoff setting. This result holds for exponential and Mittag-Leffler weight functions, and in both cases the proof relies on the corresponding weighted $L^1$  bounds. To emphasize this, and to make the presentation clear, we state the result for a general weight function, which is introduced to mimic the exponential asymptotic behavior. In particular, the weight $w$, which is a function of velocity $v$, is introduced depending on two parameters $\alpha > 0$ and $p\in (0,2]$. One can think of $\alpha$ and $p$ as describing the exponential behavior $e^{\alpha \langle v \rangle^{p}}$. More precisely, we assume that the weight function $w(v; \alpha, p)$ has the following properties:
\begin{enumerate}
\vspace{-0.1in}
	\item[(P1)] $w(v; \alpha, p)$ is strictly positive, radially increasing in $v$, and increasing in $\alpha$.
	\item[(P2)] For  every $\alpha, \alpha', p >0$
		there exists a constant $C = C(\alpha, \alpha', p)$  and  
		$c_2= c_2(p)$, so that for every $v\in \R^d$ 
		 	$$ w(v;\alpha, p) \; w(2 v; \alpha', p) \; 
			\leq \; C \, w( v; \alpha + c_2 \alpha', p).$$
	\item[(P3)] Given $\delta \in [0,1]$, and $\alpha, \alpha', p >0$ and $k\geq 0$, there exist constants $C=C(\delta,  k, \alpha, \alpha', p )$ and $D= D(\delta,  k, \alpha, \alpha', p )$  so that  $\forall v\in \R^d$ 
\begin{align}
& \mbox{If} \, \, \delta \alpha < \alpha',\quad \mbox{then} \,\,
	\frac{w(v; \alpha, p)^\delta}{w(v; \alpha', p)} \leq \frac{C}{\langle v \rangle^k} \label{P3a}\\
& \mbox{If} \, \, \delta \alpha > \alpha',\quad \mbox{then} \,\,
	\frac{w(v; \alpha, p)^\delta}{w(v; \alpha', p)} \geq D \, \langle v \rangle^k.  \label{P3b}
\end{align}
	\item[(P4)] For every $\alpha, p >0$ there is a constant $C = C(\alpha, p)$, 
		so that $\forall v \in \R^d$
			$$\left| \nabla_v\left( \frac{1}{w (v; \alpha, p)} \right) \right|\le C \langle v \rangle.$$
\end{enumerate}

\medskip

%

\begin{theorem}{(Propagation of $L^\infty_w$ tails)}\label{thm-1}\\
 Consider the Cauchy problem \eqref{eq-be} with the cross section  \eqref{eq-kernel} with $0 < \gamma \leq 1$, and the angular kernel \eqref{eq-b} with $\nu \in (0,1]$, and the initial data $f_0(v)$ which has finite mass, energy and entropy \eqref{eq-entropy}. 
Let $\alpha_0>0$, $p \in (0,2]$ and let $w( v; \alpha_0, p) \in C(\R^d \times \R^+ \times \R^+)$ be a weight function that satisfies properties $(P1-P4)$.  Suppose $f(t,v)$ is a continuous function  in $(t,v)$ such that  
\begin{itemize}
\item[(i)] for every $0<t<T$, \, $f \in L^\infty([t, T]; \mathcal{S}(\R^d))$
\item[(ii)]  $m_{\alpha_0,p}(t):=\|f(t,v) \; w(v, \alpha_0, p)\|_{L^\infty_v}$ is continous in $t$, finite for all $t \in \R^+$, and for every $t \in \R^+$ the norm is attained at some velocity $v$.
\end{itemize}

In addition, suppose that for every $\alpha > 0$ there exists $0<\alpha_1 < \alpha$ and a constant $C_1>0$  (uniform in time) such that
\begin{itemize}
\item[{(iii)}]
\begin{align*}
& \mbox{ if} \;\;  \| f_0(v) \, w( v; \alpha, p) \|_{L^1_v} < \infty, \nonumber \\
&\qquad \mbox{then} \;\; \| f(t,v) \, w( v; \alpha_1, p) \|_{L^1_v} < C_1, \quad  \forall t\geq 0. 
\end{align*}

\end{itemize}%
Then there exists $0<\alpha_2<\alpha_0$ and a constant $C$  (uniform in time, depending only on $C_1$, $p, \alpha_0$, initial data and the cross section) such that
\begin{align}
& \mbox{ if} \;\;  \| f_0(v) \, w( v; \alpha_0, p) \|_{L^\infty_v} < \infty, \nonumber \\
&\qquad \mbox{then} \;\; \| f(t,v) \, w( v; \alpha_2, p) \|_{L^\infty_v} < C, \quad  \forall t\geq 0. \label{pw}
\end{align}
In particular,
\begin{align} \label{l1-linfty}
\| f(t,v) \, w( v; \alpha_2, p) \|_{L^\infty_v} \leq c \| f(t,v) \, w( v; \alpha_1, p) \|_{L^1_v} \leq C.
\end{align}
\end{theorem}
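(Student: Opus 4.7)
The plan is to bound $m(t) := \|f(t,\cdot)\, w(\cdot;\alpha_2,p)\|_{L^\infty_v}$ uniformly in $t$ by a maximum-principle argument on the weighted function $g(t,v) := f(t,v)\, w(v;\alpha_2,p)$, for some $\alpha_2 \in (0,\alpha_0)$ to be chosen small. Thanks to the Schwartz regularity (i), $m(t)$ is attained at some $v_0 = v_0(t)$; together with the continuity from (ii), a standard envelope argument yields the Dini-type inequality
\[
D^+ m(t) \;\le\; w(v_0;\alpha_2,p)\, Q(f,f)(t, v_0),
\]
reducing the task to a pointwise estimate on $w(v_0)\, Q(f,f)(v_0)$ in terms of $m(t)$ and the $L^1$ quantities controlled by hypothesis (iii).

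Because the non-cutoff kernel prohibits a bare gain/loss split, the plan is to introduce an angular threshold $r\in(0,\pi/2]$, write $b = b_{>r} + b_{\le r}$, and estimate the two pieces separately. On the non-grazing piece the bilinear split is valid. For the gain at the maximum, using $f(v')\le m(t)/w(v';\alpha_2,p)$ together with a Carleman-type change of variables replaces the surviving factor $f(v_*')$ by $f(v_*)$ at the cost of a reshaped kernel; the remaining weight ratio is absorbed using property (P2) (iterated in the form $w(v_0;\alpha_2,p)\le C\, w(v';c_2\alpha_2,p)$, permissible since energy conservation forces $|v_0|\le\sqrt{2}\max(|v'|,|v_*'|)$) followed by property (P3) with exponent $\delta<1$ satisfying $\delta\, c_2\alpha_2<\alpha_1$, which converts the residual weight into polynomial decay $\langle v_*\rangle^{-k}/w(v_*;\alpha_1,p)$; hypothesis (iii) then yields a non-grazing gain bound of the form $C_r\, m(t)$. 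The non-grazing loss at the max equals $m(t)\int f(v_*)|u|^\gamma\bigl(\int_{\theta>r} b\, d\sigma\bigr)\, dv_*$, which because $f$ has uniformly bounded mass and energy is coercive of order $c_r\langle v_0\rangle^\gamma m(t)$ for large $|v_0|$.

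The most technical step is the grazing piece $b_{\le r}$, where neither gain nor loss is individually integrable and cancellation must be used. The plan is to expand $f(v')f(v_*') - f(v_0)f(v_*)$ to second order in $\theta$: the first-order term vanishes upon angular integration precisely because the symmetrization \eqref{bcond} built into $\tilde b$ in \eqref{eq-b} is designed to enforce this (this is the content of Lemma \ref{si2}), and the second-order remainder is integrable since $\nu<2$. Multiplying by $w(v_0;\alpha_2,p)$, property (P4) lets one trade derivatives of $f = g/w$ for derivatives of $g$ (pointwise bounded by $m(t)$) plus weight-derivative factors of polynomial growth; the grazing contribution ends up of order $C\, r^{2-\nu}(m(t)^2 + m(t))$.

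Choosing $r$ small and independent of $|v_0|$ so that the grazing contribution is dominated by the coercive loss once $|v_0|\ge R$ (with $R$ depending only on the initial data), one obtains $D^+ m(t) \le A - c\, m(t)$ in that regime, hence $m(t) \le \max\{m(0), A/c\}$. On the complementary regime $|v_0(t)|<R$, the estimate $m(t) \le w(R;\alpha_2,p)\|f(t,\cdot)\|_{L^\infty(B_R)}$ is uniformly controlled by Schwartz regularity and hypothesis (iii), and a continuity argument closes the loop to yield \eqref{pw}. The companion bound \eqref{l1-linfty} then follows by interpolating via property (P3) from $\alpha_2$ to $\alpha_1$. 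The main obstacle I foresee is not any single analytic estimate but the bookkeeping of weight rates: $\alpha_2<\alpha_1<\alpha_0$ must be spaced so that every invocation of (P2)--(P3) preserves the constraint $\delta\, c_2\alpha_2<\alpha_1$ and hypothesis (iii) remains applicable after each weight transfer.
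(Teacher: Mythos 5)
Your strategy diverges substantially from the paper's proof, and I believe it has genuine gaps that prevent it from closing. The paper also works from a maximum point $v_0$, but it uses the Alexandre--Desvillettes--Villani--Wennberg type split $Q = Q_1 + Q_2$, refined further into $Q_{1,1} + Q_{1,2} + Q_2$ with the weight explicitly built in, rather than an angular cutoff $b = b_{>r} + b_{\le r}$. The paper never Taylor-expands in $\theta$: the negative contribution comes from $Q_{1,1}(t_0,v_0)$, which is manifestly nonpositive at the max, and is quantified via the Carleman representation together with Lemmas \ref{si2} and \ref{si3} to obtain the crucial superlinear bound $Q_{1,1}(t_0,v_0) \lesssim - \langle v_0\rangle^{1+\gamma+\nu}\, m^{1+\nu/d} / \|fw\|_{L^1}^{\nu/d}$. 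That extra exponent $\nu/d > 0$ is precisely what beats the barrier $a + b t^{-d/\nu}$ and closes the contradiction argument without ever needing $m$ to satisfy a linear ODE inequality.

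Concretely, the problems with your proposed route: (1) Your grazing piece requires a second-order Taylor expansion of $f$ in $v$, hence pointwise control of $D^2 f$. Hypothesis (i) gives Schwartz regularity only on $[t,T]$ with $t>0$ and gives no bound uniform in time, and $m(t)$ is a weighted $L^\infty$ norm, not a $C^2$ norm; writing $f = g/w$ and invoking (P4) converts one gradient of $1/w$ into $\langle v\rangle$, but it does not turn second derivatives of $g$ into multiples of $m(t)$. This is a genuine gap, not bookkeeping. (2) You attribute the vanishing of the first-order angular term to "the symmetrization... this is the content of Lemma \ref{si2}"; that lemma is actually a lower bound for the Carleman kernel $K_f$ on a cone of directions, not a cancellation statement. (3) Your grazing bound contains an $m(t)^2$ term. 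Without an a priori bound on $m$, no fixed angular cutoff $r$ can make $r^{2-\nu} m^2$ subordinate to the linear coercive loss $c_r\langle v_0\rangle^\gamma m$; the argument is circular unless the quadratic term is removed. (4) Your treatment of the regime $|v_0(t)| < R$ — "uniformly controlled by Schwartz regularity and hypothesis (iii)" — is not justified: (iii) is only a weighted $L^1$ bound, and (i) is not uniform in $t$. The paper handles $|v_0|\le R$ directly inside the $Q_{1,1}$ estimate (using (P1) and the uniform $L^1$ bound \eqref{pf-l1} so that $\langle v_0\rangle^{-1}\ge\langle R\rangle^{-1}$ and $w(v_0)^{-1}\ge w(R)^{-1}$), rather than appealing to a separate uniform-in-time $L^\infty$ control on balls, which is precisely what the theorem is trying to establish.
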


\medskip

\begin{remark}
Before we discuss corollary and specific wheights that can be used, we first address the assumtions made in the theorem:
\begin{itemize}
\item[(0)]  In the assumtion (ii), we emphasize that even though we assume that  $\|f(t,v) \; w(v, \alpha_0, p)\|_{L^\infty_v}$ is finite for all $t$, we do not assume that the bound is uniform. The point of the theorem is precisely in showing the uniform in time bound of the norm 
$\|f(t,v) \; w(v, \alpha_2, p)\|_{L^\infty_v}$.

\item[(1)] In the case of hard potentials that we consider, condition (i) is satisfied. Namely, Alexandre, Morimoto, Ukai, Xu and Yang \cite[Theorem 1.2]{amuxy12} proved that weak solutions are in fact of Schwartz class provided that polynomial moments ($L^1$ polynomially weighted  norms of solutions) of all orders remain finite. This condition is known to be satisfied. In fact, for hard potentials exponential moment of order $\gamma$ (that is, $L^1$ $e^\alpha \langle v\rangle^\gamma$-weighted  norm) is generated instantaneously  and remains uniformly bounded in time. Therefore, weak solutions are really of the Schwartz class locally in time.

\item[(2)] Assumption (ii) enables us to apply a modification of some of the techniques of Silvestre \cite{si14}. Whether one can prove existence of solutions that satisfies condition (ii) is an open question.

\item[(3)] In the case of hard potentials and exponential or Mittag-Leffler weights, the assumption (iii) is known to be true \cite{algapata15}.

\end{itemize}
\end{remark}

In Section \ref{sec:ex} we provide examples of weight functions $w$ that satisfy properties (P1)-(P4), including  exponential  and Mittag-Leffler functions. For these functions, it has already been established that corresponding moments (i.e. weighted $L^1$ bounds) propagate in time, and thus satisfy the assumption (iii) of the Theorem \ref{thm-1}. As a consequence, we will be able to prove the following statement.

\begin{corollary}{(Exponential and Mittag-Leffler $L^\infty$ bounds)}\label{cor}
Consider the Cauchy problem \eqref{eq-be}  with the cross section \eqref{eq-kernel} with $0 < \gamma \leq 1$ and the angular kernel satisfying \eqref{eq-b} with $\nu \in (0,1]$, and the initial data $f_0(v)$ which has finite mass, energy and entropy.  Suppose $f(t,v)$ is a continuous function  in $(t,v)$.
\begin{itemize}
\item[(a)]  If inital data satisfies $f_0(v) \leq C_0 e^{-\alpha_0 \langle v \rangle^p}$ for some $\alpha_0>0$ and $p< \frac{4}{\nu + 2}$, and if 
\begin{itemize}
\item[(i)] for every $0<t<T$, \, $f \in L^\infty([t, T]; \mathcal{S}(\R^d))$
\item[(ii)]  $ m_{\alpha_0,p}(t):=\|f(t,v) \;e^{-\alpha_0 \langle v \rangle^p}\|_{L^\infty_v}$ is continous in $t$, finite for all $t \in \R^+$, and for every $t \in \R^+$ the norm is attained at some velocity $v$,
\end{itemize}

\noindent then there exist $0<\alpha < \alpha_0$ and a constant $C >0$   (uniform in time, depending only on $C_0$, $p, \alpha_0$, initial data and the cross section), so that 
$$f(t,v) \leq C e^{-\alpha \langle v \rangle^p}, \qquad \mbox{for all} \;\; t \ge 0.$$
\item[(b)]  On the other hand, if initial data satisfies $f_0(v) \leq C_0 \cE_{2/p}(\alpha_0^{2/p} \langle v \rangle^2)$ for some $\alpha_0>0$ and $p< \frac{4}{\nu + 2}$, and if 
\begin{itemize}
\item[(i)] for every $0<t<T$, \, $f \in L^\infty([t, T]; \mathcal{S}(\R^d))$
\item[(ii)]  $ \|f(t,v) \; \cE_{2/p}(\alpha_0^{2/p} \langle v \rangle^2)\|_{L^\infty_v}$ is continous in $t$, finite for all $t \in \R^+$, and for every $t \in \R^+$ the norm is attained at some velocity $v$,
\end{itemize}
 
\noindent  then there exist $0<\alpha < \alpha_0$ and a constant $C >0$   (uniform in time, depending only on $C_0$, $p, \alpha_0$, initial data and the cross section), so that
 $$f(t,v) \leq C \cE_{2/p}(\alpha^{2/p} \langle v \rangle^2), \qquad \mbox{for all} \;\; t \ge 0.$$

\end{itemize}

\end{corollary}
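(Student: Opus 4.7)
The plan is to apply Theorem~\ref{thm-1} directly, taking $w(v;\alpha,p):=e^{\alpha\langle v\rangle^p}$ in case (a) and $w(v;\alpha,p):=\cE_{2/p}(\alpha^{2/p}\langle v\rangle^2)$ in case (b). Hypotheses (i) and (ii) of Theorem~\ref{thm-1} are carried over verbatim from the hypotheses of the corollary, so the task reduces to verifying properties (P1)--(P4) for these weights together with the weighted $L^1$ propagation assumption (iii). Thanks to the two-sided asymptotic equivalence \eqref{eq-asym1} between $\cE_{2/p}(\alpha^{2/p}\langle v\rangle^2)$ and $e^{\alpha\langle v\rangle^p}$, properties (P1)--(P4) for the Mittag-Leffler weight reduce, up to universal constants and possibly a small decrease in the rate, to those for the exponential weight, so I will focus on the exponential case.

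For the exponential weight, (P1) is immediate from monotonicity. For (P2), the elementary bound $\langle 2v\rangle^p\le 2^p\langle v\rangle^p$ yields $e^{\alpha\langle v\rangle^p}e^{\alpha'\langle 2v\rangle^p}\le e^{(\alpha+2^p\alpha')\langle v\rangle^p}$, giving $c_2=2^p$. For (P3), the ratio equals $e^{(\delta\alpha-\alpha')\langle v\rangle^p}$, which decays (respectively grows) faster than any polynomial in the two regimes. For (P4), a direct computation gives $|\nabla_v e^{-\alpha\langle v\rangle^p}|=\alpha p\langle v\rangle^{p-2}|v|\,e^{-\alpha\langle v\rangle^p}\le C\langle v\rangle$, since $p\in(0,2]$ and the exponential factor is bounded by $1$. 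To verify assumption (iii), I invoke Theorem~\ref{thm}(b): by \eqref{eq-asym1}, a uniform-in-time weighted $L^1$ bound with the exponential weight at rate $\alpha$ is equivalent to a Mittag-Leffler moment bound of order $p$ at a comparable rate, and Theorem~\ref{thm}(b) produces such a bound under the angular assumption \eqref{s_condition}. The angular kernel \eqref{eq-b} satisfies \eqref{eq-nc} for any $\beta>\nu$: the requirement $\beta=2$ for $p\in(0,1]$ is automatic since $\nu\le 1$, while $\beta=4/p-2>\nu$ for $p\in(1,2)$ forces $p<4/(\nu+2)$, which is precisely the range stated in the corollary.

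With (P1)--(P4) and (iii) in hand, Theorem~\ref{thm-1} produces a rate $\alpha_2\in(0,\alpha_0)$ and a constant $C$ independent of $t$ such that $\|f(t,\cdot)\,w(\cdot;\alpha_2,p)\|_{L^\infty_v}<C$, equivalently $f(t,v)\le C/w(v;\alpha_2,p)$. In case (a) this is the desired exponential decay bound, and in case (b) the analogous Mittag-Leffler bound follows directly; the equivalence \eqref{eq-asym1} can also be used to pass between the two forms after a slight decrease of the rate. The only genuinely non-routine step in the corollary is the parameter bookkeeping that relates the admissible range of $p$ in Theorem~\ref{thm}(b) to the range of $\nu$ in the angular kernel, which is what produces the restriction $p<4/(\nu+2)$; beyond this, the corollary is a direct specialization of Theorem~\ref{thm-1}, and all of the substantial analytical work is carried out in the proof of that theorem.
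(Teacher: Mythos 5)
Your proposal follows essentially the same route as the paper: identify the exponential and Mittag--Leffler weights with the functions $w_1$ and $w_2$ of Section~\ref{sec:ex}, verify properties (P1)--(P4), invoke Theorem~\ref{thm}(b) to supply assumption (iii), and then apply Theorem~\ref{thm-1}. The parameter bookkeeping relating $\beta=4/p-2>\nu$ to the restriction $p<4/(\nu+2)$ is also exactly what the paper implicitly uses, and you spell it out a bit more explicitly than the published proof does.

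There is, however, one genuine flaw in your argument. You claim that the two-sided equivalence \eqref{eq-asym1},
$c\,e^{\alpha\langle v\rangle^p}\le\cE_{2/p}(\alpha^{2/p}\langle v\rangle^2)\le C\,e^{\alpha\langle v\rangle^p}$,
lets you transfer \emph{all} of (P1)--(P4) from the exponential weight to the Mittag--Leffler weight. This works for (P1)--(P3), which are purely pointwise comparisons of values (monotonicity, products, and ratios), but it does \emph{not} work for (P4), which is a bound on $\bigl|\nabla_v(1/w)\bigr|$. A two-sided pointwise sandwich between two functions gives no control whatsoever on the gradient of either one; a function trapped between $c\,e^{\alpha\langle v\rangle^p}$ and $C\,e^{\alpha\langle v\rangle^p}$ could in principle oscillate rapidly. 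Indeed, the paper verifies (P4) for $w_2$ by a separate, direct differentiation of the Mittag--Leffler series (Example 2 in Section~\ref{sec:ex}), using the identity $2k/\Gamma(\tfrac{2k}{p}+1)=p/\Gamma(\tfrac{2k}{p})\le p/\Gamma(\tfrac{2k-2}{p}+1)$ to reindex the differentiated series and cancel it against one factor of the square in the denominator. You need to supply that computation (or some analogous direct argument) for part (b); the appeal to \eqref{eq-asym1} does not cover (P4).
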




\section{Previous results on $L^\infty$ bounds} \label{sec:prev}
In this section we review recent $L^\infty$ bounds and weighted  $L^\infty$ bounds on solutions to the homogeneous Boltzmann equation, which will be relevant for the proof of our main result.

\subsection{Towards $L^\infty$ bounds: Carleman representation}
In previous works \cite{ar83, gapavi09, si14}
on enhancing upper $L^1$ bounds to upper $L^\infty$ bounds
of solutions to the homogeneous Boltzmann equation, 
a specific change of variables was used, which is often referred to as Carleman representation. 
This technique was developed by Carleman \cite{ca57}. See also \cite{we94, gapavi09, grst11}. In this process the integration over the $(d-1)$ dimensional sphere reduces to the integration over a hyperplane $\Pi$ that is orthogonal to $v'-v$. This is achieved via replacing variables $(v,v_*, \sigma)$ by $(v,v', w)$, where $w$ belongs to a hyperplane $\Pi$. 

In this manuscript we will use the following version of Carleman representation:

\begin{lemma}[Carleman representation, \cite{gapavi09, grst11, si14, we94a, ca57}]\label{car}
Let $H: \R^d \times \R^d  \rightarrow \R$. Then
\begin{align}\label{Carl}
\int_{\R^d} \int_{\mathcal{S}^{d-1}} H(v,v') \; f(v'_*) \;  B(r, \theta) \; d\sigma dv_* 
\; = \;  \int_{\R^d} H(v,v') \; K _f(v,v') \; dv',
\end{align}
where the kernel $K_f(v,v')$ is given by
\begin{align}\label{kf}
K_f(v,v') \; = \; \frac{2^{d-1}}{|v'-v|} \; \int_{\{w: w\cdot (v'-v) = 0\}} f(v+w) \; B(r,\theta) \; r^{-d+2} \; dw.
\end{align}
In the new set of variables $(v,v',w)$, we have
\begin{align*}
r = \sqrt{|v'-v|^2 + |w|^2}, \; \; \cos{\frac{\theta}{2}} = \frac{|w|}{r}, \\
v'_* = v + w, \; \; v_* = v' +w.
\end{align*}
\end{lemma}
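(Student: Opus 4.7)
The plan is to derive the identity as a change of variables from $(v_*,\sigma)\in\R^d\times S^{d-1}$ (with $v$ held fixed) to the pair $(v',w)$, where $v'\in\R^d$ and $w$ lies on the hyperplane $\Pi(v'):=\{w\in\R^d: w\cdot(v'-v)=0\}$. The starting geometric observation, obtained by substituting $u=v-v_*$ into the defining formulas, is
\[ v'-v=\tfrac{1}{2}(|u|\sigma-u),\qquad v'_*-v=-\tfrac{1}{2}(|u|\sigma+u), \]
whose dot product telescopes to $\tfrac14(|u|^2-|u|^2)=0$. Hence $w:=v'_*-v$ automatically lies in $\Pi(v')$, and conservation of momentum recovers the inverse map $v_*=v'+w$, $\sigma=(v'-v-w)/r$ with $r=\sqrt{|v'-v|^2+|w|^2}=|u|$. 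The half-angle relations $|v'-v|=r\sin(\theta/2)$ and $|w|=r\cos(\theta/2)$ follow from computing $(1\mp\hat u\cdot\sigma)$, and in particular confirm $\cos(\theta/2)=|w|/r$ as stated.

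The main step is the Jacobian. I would introduce adapted spherical coordinates $\sigma=\cos\theta\,\hat u+\sin\theta\,\omega$ with $\omega\in S^{d-2}$ perpendicular to $\hat u$, and parametrize the new side in parallel by $r,\ \phi:=\theta/2,\ \hat\eta:=(v'-v)/|v'-v|\in S^{d-1}$, and $\hat w\in S^{d-2}$ perpendicular to $\hat\eta$. A direct expansion in each system (together with the half-angle identity $\sin\theta=2\sin\phi\cos\phi$ and a small Jacobian from $(r,\phi)\mapsto(|\eta|,|w|)$) gives
\[ du\,d\sigma=2^{d-1}\,r^{d-1}\,\sin^{d-2}\phi\,\cos^{d-2}\phi\,dr\,d\phi\,d\hat u\,d\omega, \]
\[ dv'\,dw_\Pi=r^{2d-2}\,\sin^{d-1}\phi\,\cos^{d-2}\phi\,dr\,d\phi\,d\hat\eta\,d\hat w, \]
where $dw_\Pi$ is $(d-1)$-dimensional Hausdorff measure on $\Pi(v')$. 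The decisive point is that, for each fixed $\phi$, the map $(\hat u,\omega)\mapsto(\hat\eta,\hat w)$ is a planar rotation inside the $2$-plane these vectors span, hence an isometry of the orthonormal $2$-frame manifold, so $d\hat u\,d\omega=d\hat\eta\,d\hat w$. Taking the ratio and substituting $\sin\phi=|v'-v|/r$ produces the Jacobian identity
\[ du\,d\sigma=\frac{2^{d-1}}{|v'-v|\,r^{d-2}}\,dv'\,dw_\Pi. \]

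To conclude, I plug this identity into the left-hand side of \eqref{Carl}; since $v'_*=v+w$ and $dv_*=du$ (as $v$ is fixed), the integrand $H(v,v')f(v'_*)B(r,\theta)$ becomes $H(v,v')f(v+w)B(r,\theta)$ in the new variables, and pulling out the $v'$-integration groups the remaining $w$-integral with the Jacobian factor into exactly the kernel $K_f(v,v')$ of \eqref{kf}.

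The step I expect to require the most care is handling the varying hyperplane $\Pi(v')$ rigorously and justifying the measure identity $d\hat u\,d\omega=d\hat\eta\,d\hat w$ (equivalently, invariance of the Stiefel manifold $V_2(\R^d)$ under the fiberwise planar rotation). A cleaner alternative is to lift the identity to $\R^{2d}$ by replacing integration over $\Pi(v')$ with $|v'-v|\,\delta((v'-v)\cdot w)\,dw$, reducing the computation to a standard change of variables on $\R^{2d}$ and extracting the $|v'-v|^{-1}$ prefactor automatically.
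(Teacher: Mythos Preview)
The paper does not actually prove this lemma: it is stated in Section~4.1 as a classical result with citations to \cite{gapavi09, grst11, si14, we94a, ca57}, and no proof is supplied in the body or the appendices (Appendix~\ref{ap-c} only proves Lemmas~\ref{si1}, \ref{si2}, \ref{si3}). So there is no ``paper's own proof'' to compare against.

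That said, your sketch is the standard derivation one finds in those references. The geometric identities you record are correct (in particular the orthogonality $(v'-v)\cdot(v'_*-v)=0$, the inverse map $v_*=v'+w$, and the half-angle relations), and the Jacobian you arrive at,
\[
du\,d\sigma=\frac{2^{d-1}}{|v'-v|\,r^{d-2}}\,dv'\,dw_{\Pi},
\]
is exactly what is needed to produce the kernel \eqref{kf}. Your remark that the delicate point is the varying hyperplane and the identification $d\hat u\,d\omega=d\hat\eta\,d\hat w$ is accurate; the alternative you mention---inserting $|v'-v|\,\delta((v'-v)\cdot w)$ and working on $\R^{2d}$---is indeed how several of the cited sources organize the computation and is probably the cleanest way to make the argument fully rigorous.
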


\subsection{Weighted $L^\infty$ bounds for the homogeneous Boltzmann equation} 

Once weighted $L^1$  estimates are developed, the next important question is understanding pointwise behavior of solutions. 
This has been achieved in the cutoff case for the polynomial weights by Arkeryd \cite{ar83} and for exponential weights  in the work of Gamba, Panferov and Villani \cite{gapavi09}. 
We give the full statement from \cite{gapavi09} of the propagation in time of exponentially weighted $L^{\infty}$ norms of solutions to the homogeneous Boltzmann equation below.

\begin{theorem}[Gaussian-weighted pointwise bounds, cutoff case, \cite{gapavi09}]
Consider the Cauchy problem \eqref{eq-be}, \eqref{eq-kernel}, for the hard potentials $0 < \gamma \le 1$ with the angular kernel satisfying $0 \le b(\cos\theta) \le c \sin^\alpha\theta$, with $\alpha <d-1$, which corresponds to a Grad's cutoff. Suppose $f(t,v)$ is the unique solution to this Cauchy problem  with initial data satisfying 
$$ 0 \le f_0(v) \le e^{-a_0 |v|^2 + c_0}, \qquad \mbox{for a.e.} \; v\in \R^d, \; \mbox{for all} \; t \ge 0$$
that conserves the initial mass and energy. Then there exist constants $a>0$ and $c \in \R$ so that 
$$ f(t,v) \le e^{-a|v|^2 +c},  \qquad \mbox{for a.e.} \; v\in \R^d, \; \mbox{for all} \; t \ge 0.$$
\end{theorem}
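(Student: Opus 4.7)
\medskip

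The plan is to work with the pointwise ratio $g(t,v):=f(t,v)/U(v)$, where $U(v)=e^{-a|v|^2}$ with $a<a_0$ to be tuned, and to establish an a.e.~differential inequality for $M(t):=\|g(t,\cdot)\|_{L^\infty_v}$ through a strong maximum-principle argument. The cutoff hypothesis is essential, because it legitimizes the pointwise splitting $Q(f,f)=Q^+(f,f)-f\,L[f]$ with
\[
L[f](v)=\int_{\R^d}\!\int_{S^{d-1}} f(v_*)\,|v-v_*|^\gamma\,b(\cos\theta)\,d\sigma\,dv_*,
\]
so the loss part is a local multiplier whose coercivity can be directly harvested.

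The two analytic ingredients I would set up first are coercivity of $L[f]$ and a matching upper envelope for $Q^+(U,U)$. Coercivity is the Povzner-type bound
\[
L[f](t,v)\;\ge\;c_L\,\langle v\rangle^\gamma,\qquad c_L=c_L(\text{mass, energy, entropy of }f_0)>0,
\]
obtained by restricting the integral to a ball $|v_*|\le R$ (using $|v-v_*|^\gamma\gtrsim\langle v\rangle^\gamma$ there) and using finite mass/energy plus finite entropy of $f_0$ — all of which are preserved in time by assumption — to produce a uniform-in-$t$ lower bound on the mass trapped inside $B_R$. For the envelope on the gain, the key observation is that $U$ is an unnormalized Maxwellian, hence by detailed balance $Q(U,U)\equiv 0$, which forces
\[
Q^+(U,U)(v)\;=\;Q^-(U,U)(v)\;=\;U(v)\,L[U](v)\;\le\;C_U\,U(v)\,\langle v\rangle^\gamma,
\]
with $C_U=C_U(a,b,\gamma,d)$ controlled by the cutoff assumption on $b$ and an elementary estimate on the Gaussian–power-law convolution.

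With these in place, the comparison argument runs as follows. At a time $t$, since $f\le e^{-a_0|v|^2+c_0}$ implies $g(t,v)$ decays to zero as $|v|\to\infty$ (the tail decay is inherited uniformly on compact time intervals from the initial pointwise control and the assumed Schwartz-type regularity of the solution), the supremum $M(t)$ is attained at some $v_0(t)$. At that maximum one has $f\le M(t)\,U$ pointwise, and the bilinearity and positivity of $Q^+$ yield the crucial monotonicity
\[
Q^+(f,f)(v_0)\;\le\;Q^+(M U,MU)(v_0)\;=\;M(t)^2\,U(v_0)\,L[U](v_0).
\]
Dividing $\partial_t f=Q(f,f)$ by $U(v_0)$ and combining with the coercivity of $L[f]$ gives, in a Rademacher/viscosity sense,
\[
\frac{d}{dt}M(t)\;\le\;M(t)\,\langle v_0(t)\rangle^\gamma\,\bigl(C_U\,M(t)-c_L\bigr),
\]
so the threshold $M_\star:=c_L/C_U$ is an absorbing barrier: whenever $M(t)\le M_\star$ the right-hand side is non-positive, hence $M$ cannot exceed $\max(M(0),M_\star)$.

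The main obstacle — and the place where parameter juggling is required — is calibrating $a$ and the target constant $c$ so that the conclusion fits the form $e^{-a|v|^2+c}$. From the initial control $M(0)\le e^{c_0}$, if $e^{c_0}\le M_\star$ one concludes immediately with $c=\log M_\star$. In the opposite regime one replaces $U$ by $e^c U$ with $c$ large, which rescales $M$ by $e^{-c}$ and makes the initial sup fall below $M_\star$; choosing $a$ close to $a_0$ shrinks the Gaussian gain-constant $C_U$ (since convolution of $U$ with $|v-v_*|^\gamma$ is smaller for narrower Gaussians), which improves $M_\star$. A second technical difficulty is rigorously justifying the differential inequality for $M(t)$ — namely, that the sup is actually attained, that it depends measurably on $t$, and that Rademacher-style differentiation applies — which requires the decay and continuity properties of $f$ ensured by conservation of mass, momentum, and energy together with Arkeryd-type existence of solutions in $L^1_2\cap L\log L$ whose tails are controlled initially. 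Finally, one must track constants through the Povzner lemma carefully enough to ensure $c_L$ depends only on the initial mass, energy, and entropy, so that the argument closes uniformly in $t$.
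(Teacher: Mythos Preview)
This theorem is quoted from \cite{gapavi09} and is not proved in the present paper; the paper only sketches the original argument, which hinges on (i) a comparison/maximum principle obtained from monotonicity of a linear Boltzmann semigroup, (ii) an exponentially weighted pointwise upper bound on $Q^+$ via its Carleman representation, and crucially (iii) the propagation of \emph{$L^1$} exponential moments. Your route is genuinely different: you run a direct ODE for $M(t)=\|f/U\|_{L^\infty}$ and replace the Carleman machinery by the detailed-balance identity $Q^+(U,U)=U\,L[U]$, never invoking $L^1$ weighted moments at all.

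That shortcut is exactly where the argument breaks. The bound $Q^+(f,f)(v_0)\le M^2\,U(v_0)\,L[U](v_0)$ is \emph{quadratic} in $M$, so your differential inequality
\[
\frac{d}{dt}M(t)\;\le\;M(t)\,\langle v_0(t)\rangle^\gamma\,\bigl(C_U\,M(t)-c_L\bigr)
\]
is dissipative only when $M\le M_\star=c_L/C_U$. If $M(0)>M_\star$ the right-hand side is positive and $\langle v_0(t)\rangle$ is uncontrolled, so nothing prevents $M$ from growing without bound; the claim ``$M$ cannot exceed $\max(M(0),M_\star)$'' is simply false for this inequality. Your proposed fix of replacing $U$ by $e^cU$ does not help: it multiplies $M(0)$ by $e^{-c}$, but since $L[e^cU]=e^cL[U]$ it multiplies $C_U$ by $e^{c}$ and hence $M_\star$ by $e^{-c}$ as well, leaving the obstruction $M(0)/M_\star$ unchanged. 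Pushing $a$ toward $a_0$ does not force $M(0)\le M_\star$ either, since $M_\star$ scales like $a^{d/2}$ while $M(0)\le e^{c_0}$ independently of $a$.

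What makes the \cite{gapavi09} argument close is precisely that one does \emph{not} bound both entries of $Q^+$ by $MU$. Using the Carleman form of the gain together with the independently known uniform-in-time $L^1$ exponential moment $\|f\,e^{a'|v|^2}\|_{L^1}\le C_1$ produces an estimate on $Q^+(f,f)$ that is at worst linear in $M$ with a coefficient that can be absorbed by the loss term $c_L\langle v\rangle^\gamma$; the comparison principle then yields a global bound regardless of the size of $M(0)$. In short, the $L^1$ exponential-moment input that you discarded is not decorative---it is the mechanism that converts the gain bound from quadratic to (sub)linear in $M$ and allows the estimate to close.
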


The key tool for proving the pointwise estimate of \cite{gapavi09} is the comparison principle for
a solution to the spatially homogeneous Boltzmann equation, which was 
also established in \cite{gapavi09}, thanks to a monotonicity property 
of a linear Boltzmann semigroup. A crucial ingredient for a successful application of the 
comparison principle is an exponentially weighted upper bound of the linear ``gain" operator, 
which was obtained in \cite{gapavi09} using Carleman's form of the ``gain" term
and careful estimates some of which use the propagation of exponentially weighted 
$L^1$ norms of the solution. 

Although the comparison principle of \cite{gapavi09} 
is stated in the case of a cutoff, the proof suggests that it should be expected in a non-cutoff case. However
that is not sufficient to obtain the analogue of the point-wise propagation estimate of \cite{gapavi09}
in a non-cutoff case, 
since in \cite{gapavi09} the application of the comparison principle proceeds via 
separately estimating the gain and loss terms, the procedure which cannot be carried out in a non-cutoff case. 
Despite not using the comparison principle\footnote{Instead, we 
modify the contradiction argument from the recent work of Silvestre \cite{si14}. },
our proof of a propagation in time of exponentially decaying point-wise  
estimates carries a similarity to the idea of \cite{gapavi09}, in the sense that we too 
employ the estimates coming from the propagation of exponentially weighted 
$L^1$ norms of the solution, i.e. we ``enhance" weighed $L^1$ estimate to obtain
weighted $L^{\infty}$ estimates.

\subsection{Recent $L^\infty$ bounds for the Boltzmann equation}

Recently Silvestre \cite{si14} obtained certain regularity results 
for the Boltzmann equation in a non-cutoff case, via 
introducing at the level of the Boltzmann equation techniques 
inspired by the theory of integro-differential equations. Along the way, 
Silvestre \cite{si14} proved the following pointwise bound 
for a solution to the Boltzmann equation.

\begin{theorem}[Non-weighted pointise bounds, non-cutoff case, \cite{si14}]
\label{th-si14} 
Suppose $f(t,v)$ is a weak solution to the Boltzmann equation \eqref{eq-be}  that satisfies conditions (i) and (ii) of Theorem 3.1 with the weights $w(v,\alpha_0, p) = 1$. Then
$$ \|f(t,v)\|_{L^\infty_v} \le a + b t^{-\beta},$$
for some constants $a, b, \beta$ depending only on the initial energy, mass and entropy.
\end{theorem}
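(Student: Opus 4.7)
\textbf{Proof proposal for Theorem \ref{th-si14}.}

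The plan is to prove the bound by a barrier/contradiction argument based on a first‑crossing time. Let $M(t):=\|f(t,\cdot)\|_{L^\infty_v}$ and let $\varphi(t):=a+bt^{-\beta}$. Suppose, for a contradiction, that there exist $a,b,\beta>0$ (to be chosen) and a first time $t_0>0$ and a velocity $v_0\in\R^d$ such that $f(t_0,v_0)=\varphi(t_0)$ and $f(t,v)<\varphi(t)$ for every $v\in\R^d$ and every $t<t_0$. At this first crossing point, $v\mapsto f(t_0,v)$ attains its global maximum at $v_0$, and
\begin{equation*}
\partial_t f(t_0,v_0)\;\ge\;\varphi'(t_0)\;=\;-\beta b\,t_0^{-\beta-1}.
\end{equation*}
The strategy is to show that the equation $\partial_t f=Q(f,f)$ forces a strict lower bound $Q(f,f)(t_0,v_0)\le -c\,\varphi(t_0)^{1+\nu/d}+C$, and to then choose $a,b,\beta$ so that $-c\varphi(t_0)^{1+\nu/d}+C<\varphi'(t_0)$, which is the desired contradiction.

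Next, I would rewrite $Q(f,f)(t_0,v_0)$ via the Carleman representation of Lemma \ref{car}. Using the symmetry $(v_*,\sigma)\leftrightarrow(v'_*,-\sigma)$ appropriate to the non‑cutoff setting, one expresses
\begin{equation*}
Q(f,f)(t_0,v_0)\;=\;\int_{\R^d}\bigl(f(t_0,v')-f(t_0,v_0)\bigr)\,K_{f}(v_0,v')\,dv'\;+\;R(v_0),
\end{equation*}
where $K_f$ is the Carleman kernel \eqref{kf} and $R$ is an $L^\infty$–controlled error that (after symmetrization) is bounded by a constant depending only on the hydrodynamic moments and $\gamma$. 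Because $v_0$ is the global maximum, the integrand $f(v')-f(v_0)$ is pointwise non‑positive, so the sign of the first term is non‑positive, and the task reduces to extracting a quantitative lower bound from it.

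The crucial ingredient is a coercivity/lower bound for $K_f(v_0,v')$ of the form
\begin{equation*}
K_f(v_0,v')\;\ge\;c_*\,|v'-v_0|^{-d-\nu},\qquad v'\in v_0+\Gamma,\ |v'-v_0|\le 1,
\end{equation*}
for some cone $\Gamma$ of positive solid angle, where $c_*>0$ depends only on initial mass, energy, and entropy. This is obtained by inserting the pointwise lower bound on $b(\cos\theta)$ from \eqref{eq-b} into \eqref{kf} and observing that it reduces to a hyperplane integral of $f$; the mass/energy/entropy hypotheses prevent $f$ from concentrating and yield a uniform lower bound on that hyperplane integral (this is the classical consequence of the entropy control, which rules out Dirac‑type concentrations and gives a ball of definite size and measure on which $f\ge \eta_0>0$, in turn forcing the Carleman kernel to see this mass for every $v_0$). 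To then exploit the maximum, I would use the $L^1$ control of $f$: the set $A=\{v'\in B_1(v_0):f(v')>\varphi(t_0)/2\}$ has Lebesgue measure at most $2\|f\|_{L^1}/\varphi(t_0)$, hence on a set of measure at least $c\,\varphi(t_0)^{-d/\nu\cdot \nu/d}$ in the cone‑annulus of radii $\sim \varphi(t_0)^{-1/d}$ centered at $v_0$ one has $f(v_0)-f(v')\ge \varphi(t_0)/2$. Combining with the kernel lower bound gives
\begin{equation*}
-Q(f,f)(t_0,v_0)\;\ge\;c\,\varphi(t_0)\cdot\varphi(t_0)^{\nu/d}\,-\,C,
\end{equation*}
which is the claimed super‑linear lower bound. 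Comparing with $\partial_t f(t_0,v_0)\ge -\beta b t_0^{-\beta-1}$ and choosing $\beta=d/\nu$, $b$ large, and $a$ large enough to absorb the additive constant $C$, one arrives at a contradiction and concludes $M(t)\le a+bt^{-d/\nu}$ for all $t>0$.

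\textbf{Main obstacle.} The hard step is the quantitative coercivity estimate for $K_f(v_0,v')$ uniformly in $v_0$ (in particular for $|v_0|$ large). Naively, $K_f$ could degenerate if the mass of $f$ is carried away from $v_0$, but the combination of conservation of mass/energy and boundedness of entropy yields a uniform "non‑concentration plus non‑escape" statement sufficient to produce a cone of directions along which the hyperplane integral defining $K_f$ is bounded below by a constant times $|v'-v_0|^{\gamma}|v'-v_0|^{-d-\nu}$; handling the $|u|^\gamma$ factor with $0<\gamma\le 1$ and the non‑cutoff angular singularity simultaneously, while tracking the error term $R(v_0)$, is the technical heart of the proof.
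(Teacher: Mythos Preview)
Your proposal is correct and follows essentially the same route as \cite{si14} (which this paper reproduces in the weighted setting in Section~\ref{sec:proof}): the first-crossing contradiction, the decomposition $Q=Q_1+Q_2$ with $Q_1=\int (f'-f)K_f\,dv'$ via Carleman, the cone lower bound on $K_f$ (Lemma~\ref{si2}), the $L^1$-based super-linear estimate (Lemma~\ref{si3}), and the choice $\beta=d/\nu$.

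Two points in your write-up need tightening, though neither changes the strategy. First, the remainder $R=Q_2$ is not bounded by a constant independent of $m$: the cancellation lemma gives $Q_2(f,f)(v_0)=(\tilde B*f)(v_0)\,f(v_0)\le C\,m\,\langle v_0\rangle^\gamma$, so the correct inequality is $Q(f,f)(t_0,v_0)\le -c\,m^{1+\nu/d}+C\,m$; this linear term is still absorbed by choosing $a$ with $a^{\nu/d}\ge 2C/c$, exactly as in \eqref{concl-1}--\eqref{pf-cond2}. Second, the cone in Lemma~\ref{si2} does \emph{not} have fixed aperture uniformly in $v_0$: one only gets $|A(v_0)|\ge \mu/\langle v_0\rangle$, and this loss feeds into Lemma~\ref{si3} as a factor $\langle v_0\rangle^{-1-\nu/d}$. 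The compensation comes from the extra gain $\langle v_0\rangle^{1+\gamma+\nu}$ in the lower bound \eqref{eq-lower Kf} for $K_f$, so that the net power of $\langle v_0\rangle$ in the $Q_1$ estimate is $\gamma+\nu(1-1/d)\ge 0$. This is precisely the mechanism that resolves the ``main obstacle'' you correctly flag; your sketch just needs to track these two $\langle v_0\rangle$-dependent factors rather than treat them as uniform constants.
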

We note that the statement is written so that it can be easily compared to the statement of Theorem 3.1. In \cite{si14} the solution was referred to as a classical solution.

In this paper we generalize Theorem \ref{th-si14}, to obtain a propagation in time of  {\it weighted} $L^{\infty}$ norms of a solution. The proof builds on the known weighted $L^1$ bounds, and one of the key tools used in that direction is the Carleman representation (Lemma \ref{Carl}).

The following lemma from \cite{si14} provides an estimate that we use on the kernel $K_f$ (see \eqref{kf} for the definition of $K_f$). This lemma uses the specific structure of the angular kernel as given in \eqref{eq-b} and we will explain this more in the Appendix \ref{ap-b}. 

\begin{lemma}[Corollary 4.2, \cite{si14}]\label{si1}
For the angular kernel that satisfies \eqref{eq-b}, the weight function $K_f$ in the Carleman representation \eqref{Carl} satisfies
\begin{align}\label{lm-si1}
K_f(t,v,v') \, \approx \, \left(\int_{\{ w: w \cdot (v'-v) =0 \}} f(v+ w) \, |w|^{1+\gamma+\nu} \, dw\right) \, |v'-v|^{-d-\nu}
\end{align}
\end{lemma}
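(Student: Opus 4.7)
\textbf{Proof plan for Lemma \ref{si1}.}
The plan is to start from the Carleman representation in Lemma \ref{car}, substitute the factorized kernel $B(r,\theta) = r^{\gamma} b(\cos\theta)$, and use the specific form \eqref{eq-b} of $b$ together with the geometric identities
$\cos(\theta/2) = |w|/r$ and $\sin(\theta/2) = |v'-v|/r$, which in particular yield
\[
    \sin\theta \; = \; \frac{2\,|w|\,|v'-v|}{r^{2}}, \qquad r^{2} = |w|^{2} + |v'-v|^{2}.
\]
The main observation is that the two halves of the angular support in \eqref{eq-b} (the part with $\cos\theta>0$, where $b(\cos\theta)\approx(\sin\theta)^{-(d-1)-\nu}$, and the part with $\cos\theta<0$, where $b(\cos\theta)\approx(\sin\theta)^{1+\gamma+\nu}$) correspond, under the Carleman change of variables, to the regions $|w|>|v'-v|$ and $|w|<|v'-v|$ respectively, on each of which $r$ is comparable to the larger of the two lengths.

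Concretely, I would split the $w$-integral in \eqref{kf} into the two regions.
In the region $|w|>|v'-v|$ one has $\cos\theta>0$ and $r\approx|w|$; substituting $\sin\theta = 2|w||v'-v|/r^{2}$ into $(\sin\theta)^{-(d-1)-\nu}$ and collecting the factor $r^{\gamma}\cdot r^{-d+2}\cdot r^{2((d-1)+\nu)}$ gives, after dividing by $|v'-v|$,
\[
    \frac{B(r,\theta)\,r^{-d+2}}{|v'-v|}
    \;\approx\; |w|^{1+\gamma+\nu}\,|v'-v|^{-d-\nu}.
\]
In the complementary region $|w|<|v'-v|$ one has $\cos\theta<0$ and $r\approx|v'-v|$; substituting $\sin\theta = 2|w||v'-v|/r^{2}$ into $(\sin\theta)^{1+\gamma+\nu}$ and again collecting powers of $r$ produces exactly the same expression. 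Summing the two pieces then yields the claimed equivalence.

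\textbf{Main obstacle.} The arithmetic itself is elementary; the real point to verify carefully is that the tailored choice \eqref{eq-b} of $\tilde{b}$ is precisely what makes the two cases give the \emph{same} kernel expression, so that the $r$-dependence (which a priori could produce different behaviour on the two halves of the sphere) cancels uniformly. One must also check that the change of variables is being applied with the correct Jacobian, that the threshold $|w|=|v'-v|$ (corresponding to $\theta=\pi/2$) does not create boundary issues since both sides match there up to constants, and that the resulting $|w|^{1+\gamma+\nu}$ weight is integrable against $f$ on the hyperplane under the assumptions on $f$ (this is where one uses $\nu\in(0,1]$ and the moment/decay information on $f$). These are the points I would emphasize in a full write-up; the formula itself then drops out of the two-case computation above.
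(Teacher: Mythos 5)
Your proposal is correct and follows essentially the same route as the paper's own proof in Appendix C: both split according to the sign of $\cos\theta$ (equivalently $|w|\lessgtr|v'-v|$), use that $r$ is comparable to the larger of $|w|$ and $|v'-v|$, substitute the tailored form \eqref{eq-b} of $b$, and verify that both halves produce the identical kernel $|w|^{1+\gamma+\nu}|v'-v|^{-d-\nu}$ after accounting for the $|v'-v|^{-1}$ prefactor. The only stylistic difference is that you expand $\sin\theta=2|w||v'-v|/r^{2}$ at once, whereas the paper factors $\sin\theta=2\sin(\theta/2)\cos(\theta/2)$ and discards the half-angle factor that is $\approx 1$; the resulting power-counting is identical.
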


On the other hand, the following lemma from \cite{si14} provides a lower bound on the kernel $K_f$ in the Carleman representation on a distinguished set of points that lie on a certain cone. Its proof uses the representation from the above lemma.
\begin{lemma}[Lemma 7.1, \cite{si14}]\label{si2}
Suppose $f$ is a nonnegative function on $\R^d$ such that
\begin{align}\label{meh2}
  M_1 \leq &\int_{\R^d} f(v) dv \leq M_0 \nonumber\\
& \int_{\R^d} |v|^2 f(v) dv \leq E_0\\
& \int_{\R^d} f(v) \log f(v) dv \leq H_0. \nonumber
\end{align}
Then, for any $v\in\R^d$, there exists a symmetric subset $A(v)$ of the unit sphere, and there are constants $\mu, \lambda, C$ (that depend on mass, energy and entropy bounds) so that
\begin{itemize}
	\item[(i)]  $|A(v)| \geq \tfrac{\mu}{\langle v \rangle}$, where $|A(v)|$ denotes the $(d-1)$-Hausdorff measure of $A(v)$;
	\item[(ii)] For every $v'$ for which the normalized vector $\tfrac{v'-v}{|v'-v|}$ belongs to the set $A(v)$, we have
		\begin{align}\label{eq-lower Kf}
			K_f(v,v') \geq \lambda \; \langle v \rangle^{1+\gamma+\nu} \; |v'-v|^{-d-\nu},
		\end{align}
	\item[(iii)] for every $\sigma \in A(v)$, \; $|\sigma \cdot v| \leq C$.
\end{itemize}
\end{lemma}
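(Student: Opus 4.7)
I would invoke Lemma \ref{si1} to reduce the claim to the assertion that
$$
I(e) \;:=\; \int_{w \in e^\perp} f(v+w)\,|w|^{1+\gamma+\nu}\,dw \;\gtrsim\; \langle v\rangle^{1+\gamma+\nu}
$$
holds for every $e$ in a symmetric subset $A(v) \subset S^{d-1}$ with $|A(v)|_{d-1} \gtrsim 1/\langle v\rangle$. I plan to construct $A(v)$ inside the near-equatorial band $A^* := \{\sigma \in S^{d-1} : |\sigma\cdot v| \leq C\}$. Geometrically, for $e\in A^*$ the affine hyperplane $v+e^\perp$ lies within distance $C$ of the origin and hence slices through the ``bulk'' region where $f$ is not negligible; on that slice $|u-v|\approx \langle v\rangle$ for large $|v|$, which accounts for the weight $\langle v\rangle^{1+\gamma+\nu}$.

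From the mass, energy and entropy bounds, a standard de la Vall\'ee-Poussin plus Chebyshev argument produces constants $R', c > 0$ (depending only on $M_0, M_1, E_0, H_0$) and a set $E'\subset B(0,R')$ with $|E'|\geq c$ and $f\geq c$ pointwise on $E'$. With $C := R'$ and $|v|\geq 2R'$, whenever $u\in E'$ and $e\perp(u-v)$ one has $|e\cdot v| = |e\cdot u| \leq R' = C$, so the entire equator $(u-v)^\perp\cap S^{d-1}$ lies in $A^*$. The sphere coarea identity
$$
\int_{S^{d-1}}\int_{w\in e^\perp}\Phi(w,e)\,dw\,de \;=\; \int_{\R^d}\frac{1}{|w|}\int_{e\in w^\perp\cap S^{d-1}}\Phi(w,e)\,de\,dw,
$$
applied to $\Phi(w,e) = \mathbf{1}_{A^*}(e)\,\mathbf{1}_{E'}(v+w)$, then yields
$$
\int_{A^*} |(v+e^\perp)\cap E'|_{d-1}\,de \;\gtrsim\; \frac{|S^{d-2}|\,|E'|}{\langle v\rangle}.
$$
Combined with $|A^*|_{d-1}\lesssim 1/\langle v\rangle$ and the uniform upper bound $|(v+e^\perp)\cap E'|_{d-1}\leq |B_{d-1}(0,R')|$, a Chebyshev-type pigeonhole on $A^*$ extracts a subset $A(v)$ with $|A(v)|_{d-1}\gtrsim 1/\langle v\rangle$ on which $|(v+e^\perp)\cap E'|_{d-1}\geq c_0 > 0$. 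Since $e^\perp = (-e)^\perp$, the set $A(v)$ is automatically symmetric. On this set, the lower bound $f\geq c$ on $E'$ together with $|u-v|\geq \langle v\rangle/2$ for $u\in E'$ and $|v|\geq 2R'$ immediately yields the required $I(e) \gtrsim \langle v\rangle^{1+\gamma+\nu}$.

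The main technical obstacle is the bookkeeping in the coarea identity together with the geometric verification that, for $u\in E'$, the full equator $(u-v)^\perp \cap S^{d-1}$ sits inside $A^*$; conceptually, the indispensable input is the non-concentration of $f$ supplied by the entropy bound, without which $I(e)$ could vanish on essentially all of $A^*$. The regime $|v|\leq 2R'$, in which $\langle v\rangle$ is bounded, is handled by the same averaging argument with $A^* = S^{d-1}$ (the constraint $|\sigma\cdot v|\leq C$ is automatic once $C\geq 2R'$) and a modified $E'$ separated from $v$ by a fixed distance $\delta > 0$ (the mass in $B(v,\delta)$ is itself controlled by entropy plus Lebesgue volume, so this modification costs only a fixed fraction of $|E'|$); this produces $A(v)$ of measure bounded below by an absolute constant, which exceeds $\mu/\langle v\rangle$ after adjusting $\mu$.
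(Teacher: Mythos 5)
Your proposal is correct and follows the same essential strategy as the paper's proof (which itself reproduces Silvestre's argument): (1) the mass, energy, and entropy bounds are converted, via Chebyshev-type cutoffs, into a bounded set $E'$ of measure bounded below on which $f$ is bounded below (the paper isolates this as Lemma \ref{L46}, your ``de la Vall\'ee--Poussin plus Chebyshev'' step); (2) Lemma \ref{si1} reduces the kernel lower bound to a lower bound on the hyperplane integral $I(e)$; (3) a coarea/change-of-variables identity over hyperplanes (the paper's Lemma \ref{lm-change}) together with a pigeonhole on the sphere extracts a large enough set of directions $e$ on which the hyperplane $v+e^\perp$ captures a definite amount of $E'$. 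The differences from the paper are organizational: the paper defines $A(v)$ directly as the set of directions $\sigma$ for which $|\sigma^\perp\cap(S-v)|>\delta$ and estimates $|A(v)|$ from below by carefully bookkeeping the change-of-variables computation against $|S|\geq m$; you introduce the equatorial band $A^*$ explicitly first, observe the geometric fact that for $u\in E'$ the entire equator $(u-v)^\perp\cap S^{d-1}$ lies in $A^*$, and then run the Chebyshev pigeonhole on the average $\int_{A^*}h$ against the pointwise cap $h\leq|B_{d-1}(0,R')|$ and $|A^*|\lesssim1/\langle v\rangle$. Your version is slightly cleaner in that it tracks the $1/\langle v\rangle$ factor uniformly and treats small $|v|$ by simply enlarging the band to all of $S^{d-1}$, whereas the paper splits into two cases with somewhat different bookkeeping. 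One small imprecision: when removing $B(v,\delta)$ from $E'$ in the small-$|v|$ regime, only a Lebesgue volume count is needed ($|B(v,\delta)|$ can be made $\leq|E'|/2$ by choosing $\delta$ small), so the reference to ``entropy'' there is superfluous; it is not an error, merely unnecessary. Both proofs correctly conclude part (iii) from the fact that $A(v)\subset A^*$ (large $|v|$) or trivially (small $|v|$), and part (ii) from $|w|\approx\langle v\rangle$ for $v+w\in E'$ together with the captured slice measure.
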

\begin{remark} \label{cone}
Given $v$ and the corresponding subset $A(v)$ of the unit sphere determined by the above lemma, we denote by $\Sigma(v)$ 
the corresponding cone centered at $v$ of all vectors $v'$ for which the normalization $\tfrac{v'-v}{|v'-v|}$ belongs to the set $A(v)$ i.e.
$$
\Sigma(v) := \left\{  v' \in \R^d \, : \, \frac{v'-v}{|v'-v|} \in A(v) \subset \mathcal{S}^{d-1} \right\}.
$$
It is for the points $v' \in \Sigma(v)$ that the lower bound in (ii) holds.
\end{remark}
The final lemma of this section provides a lower bound of an integral over a cone $\Sigma$ determined by a vector $v$ and a subset $A$ of the unit sphere. This will be crucial in estimating the negative contribution of the collisional operator.
\begin{lemma}[Lemma 7.2, \cite{si14}]\label{si3}
Assume that the maximum of a function $g(v)$ is achieved at $v=\tilde{v}$ and is equal to $\tilde{m}$. Assume $A$ is a subset of the unit sphere and that $|A| \geq \mu >0$.  Let  $\cC$ be the cone centered at $v$ that consists of all vectors $v' \in \R^d$ for which the normalized vector $\tfrac{v'-v}{|v'-v|}$ belongs to the set $A$, i.e. $\cC := \left\{  v' \in \R^d \, : \, \frac{v'-v}{|v'-v|} \in A \right\}$. Then
\begin{align}
\int_{\cC} (\tilde{m} - g(v')) \; |\tilde{v} - v'|^{-d-\nu} \; dv' \geq \frac{c \; \tilde{m}^{1+\nu/d} \; \mu^{1+\nu/d}}{\left(\int_{\cC} |g(v')| dv'\right)^{\nu/d}}.
\end{align}
\end{lemma}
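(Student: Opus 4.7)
The plan is an outer/inner decomposition: restrict the integral to $\cC \setminus B_R(\tilde v)$ for a radius $R$ to be chosen, use that $\tilde m - g \ge 0$ pointwise (by definition of $\tilde m$ as the maximum) to discard the piece on $B_R(\tilde v) \cap \cC$, and balance the two resulting contributions. I read the lemma with the cone apex at $\tilde v$, which is the only geometric configuration in which it will be applied later (the cone $\Sigma(\tilde v)$ of Remark \ref{cone} with the maximizer playing the role of $v$); a generic apex $v \neq \tilde v$ would introduce only an absolute geometric constant provided $\tilde v$ lies inside $\cC$.

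The central step is the split
$$\int_{\cC \setminus B_R(\tilde v)} (\tilde m - g(v'))\,|\tilde v - v'|^{-d-\nu}\, dv' \;=\; \tilde m \int_{\cC \setminus B_R} |\tilde v - v'|^{-d-\nu}\, dv' \;-\; \int_{\cC \setminus B_R} g(v')\,|\tilde v - v'|^{-d-\nu}\, dv'.$$
Polar coordinates centered at $\tilde v$ evaluate the first integral as $|A|\int_R^\infty r^{-1-\nu}\,dr = \mu/(\nu R^\nu)$, while the crude pointwise bound $|\tilde v - v'|^{-d-\nu} \le R^{-d-\nu}$ on the outer region controls the second integral by $R^{-d-\nu} G$, where $G := \int_\cC |g(v')|\,dv'$. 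Together these give
$$I \;\ge\; \frac{\tilde m\, \mu}{\nu R^\nu} \;-\; \frac{G}{R^{d+\nu}}.$$

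The last step is to optimize over $R$. Setting $R^d = \kappa G/(\tilde m \mu)$ for a sufficiently large absolute constant $\kappa$ makes both terms scale as $\tilde m^{1+\nu/d}\mu^{1+\nu/d} G^{-\nu/d}$, with the first dominating the second by a fixed factor, so
$$I \;\ge\; c\, \tilde m^{1+\nu/d}\, \mu^{1+\nu/d}\, G^{-\nu/d},$$
which is the claimed bound. There is no real analytical obstacle here: the two ingredients are the pointwise inequality $\tilde m - g \ge 0$ and the clean integration of the singular weight over a cone, and the correct homogeneity in $(\tilde m, \mu, G)$ emerges automatically from the balancing. The only conditions to keep an eye on are that $\nu > 0$ (so that the weight is integrable on the outer region), which holds under \eqref{b-pw}, and that the cone $\cC$ indeed carries the full $(d-1)$-measure $\mu$ when intersected with the unit sphere, which is built into the hypothesis $|A|\ge\mu$.
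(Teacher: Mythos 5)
Your proof is correct, and it takes a genuinely different route from the paper's. The paper argues via a sublevel-set / Chebyshev decomposition combined with a bathtub-type rearrangement: it sets $G := \{v' \in \cC : g(v') \le \tilde m/2\}$, uses Chebyshev to bound $|\cC \setminus G| \le \tfrac{2}{\tilde m}\int_\cC |g|$, notes that $\tilde m - g \ge \tilde m/2$ on $G$, and then observes that among sets $G \subset \cC$ whose complement has that measure bound, the quantity $\int_G |\tilde v - v'|^{-d-\nu}\,dv'$ is minimized when $G = \cC \setminus B_r$ for the radius $r$ with $|\cC \cap B_r| = \tfrac{2}{\tilde m}\int_\cC |g|$. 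You instead truncate first, discard the inner region by nonnegativity of $\tilde m - g$, split the outer integral into $\tilde m \int_{\cC\setminus B_R} |\cdot|^{-d-\nu}$ minus $\int_{\cC\setminus B_R} g\,|\cdot|^{-d-\nu}$, bound the second term crudely via $|\tilde v - v'|^{-d-\nu} \le R^{-d-\nu}$ and the $L^1$ norm of $g$, and optimize over $R$. Both approaches single out essentially the same critical radius $R \sim (G/(\tilde m \mu))^{1/d}$, so the homogeneity in $(\tilde m, \mu, \int|g|)$ comes out identically; your version avoids invoking the bathtub principle in exchange for a slightly more wasteful constant (you give up a factor of $1/2$ by simply discarding the inner piece rather than exploiting the pointwise slack $\tilde m - g \ge \tilde m/2$ there). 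Your parenthetical reading of the cone apex as $\tilde v$ is the intended one (the paper's statement writes $v$ but then integrates against $|\tilde v - v'|^{-d-\nu}$ and applies the lemma with the maximizer $v_0$ as apex), and your observation that $\nu > 0$ is what makes the outer integral converge matches the paper's use of $r^{-\nu}$.
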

%
%


\section{Proof of Theorem \ref{thm-1}} \label{sec:proof}
To prove propagation in time of weighted $L^{\infty}_v$ norm of solutions to the Boltzmann equation, we modify the contradiction argument of Silvestre  used to prove Theorem \ref{th-si14}. 
Since we too are in the case of a non-cutoff, we cannot use the splitting of the collision operator into the ``gain" and ``loss" terms. However the standard splitting (see 
\eqref{noncut-Q12-split}) that is often used in non-cutoff cases, and which has been used by Silvestre \cite{si14} too,  is not adequate for us. We need to further refine the splitting (for details see 
\eqref{eq-splitting}) to be able to 
obtain {\it weighted} upper bounds. In particular, the appearance of the term $Q_{1,2}$ in
\eqref{eq-splitting} is new. To control that term, we
need to overcome the singularity of a non-cutoff collision operator, which we do 
thanks to oscillations present in the weight function. 
The other substantial difference with respect to \cite{si14} is that 
in our estimates we take the advantage of the known propagation of $w$-moments.


\subsection{Setting up the contradiction argument}

Let $\alpha_0 >0$ and $p\in(0,2]$ be fixed, and suppose that initial data satisfies
\begin{equation} \label{pf-id-infty}
 \left\| f_0(v) \; w(v;\alpha_0,p)\right\|_{L^\infty_v} \le C < \infty.
\end{equation}
Then for $\alpha = \alpha_0^{-}$ we have
thanks to \eqref{pf-id-infty} 
\begin{align} 
\| f_0(v) \; w(v;\alpha,p) \|_{L^1_v} 
& \leq C \int \frac{ w(v;\alpha,p) }{ w(v;\alpha_0,p) } \; dv \nonumber \\ 
& \leq C \int \frac{1}{ \langle v \rangle^{d^+} }\; dv \label{pf-infty-P3} \\
& < \infty,
\end{align}
where to obtain \eqref{pf-infty-P3} we used the property (P3). Therefore,  assumption (ii) implies that there exists $\alpha_1 < \alpha_0$ and $C_1>0$ such that 
\begin{equation} \label{pf-l1} 
\| f(t,v) \; w(v;\alpha_1,p) \|_{L^1_v} \leq C_1.
\end{equation}

It is convenient to introduce the following notation. 
For parameters $\beta, p$ and for any $t \ge 0$, let $m_{\beta, p}(t)$  denote the $w(v; \beta, p)$-weighted $L^\infty$ norm in velocity, i.e.
\begin{align}\label{def-m}
m_{\beta, p}(t) := \left\| f(t,v) \; w(v;\beta,p)\right\|_{L^\infty_v}.
\end{align}

In order  to prove the theorem, it suffices to find $\alpha_2, a, b >0$ such that 
\begin{align}\label{eq-goal}
m_{\alpha_2, p}(t) < a + b \, t^{-d/\nu}.
\end{align}

First, we show that \eqref{eq-goal} is true at $t=0$
for $\alpha_2 < \alpha_0$ and $a,b > 0$ that will be determined later in the proof. Namely, by the property (P1) that expresses monotonicity of 
$w(v;\beta, p)$ in $\beta$, we have 
\begin{equation} \label{pf-m-0-a2}  
m_{\alpha_2, p}(0) \leq m_{\alpha_0, p}(0) < \infty,
\end{equation} 
where the last inequality follows from \eqref{pf-id-infty}. 
On the other hand, $a+bt^{-d/\nu}$ blows up around $t=0$. Thus, the inequality \eqref{eq-goal} trivially holds for $t=0$, and by the continuity of $m_{\alpha_2, p}(t)$  it is satisfied on a time interval of positive measure starting at $t=0$. 

Now, assume that there exists the first time $t_0 >0$ for which the inequality \eqref{eq-goal} fails. At the time $t_0$
\begin{align}
m_{\alpha_2, p}(t_0) = a + b t_0^{-d/\nu}.
\end{align}

Since $f$ satisfies property (ii) in Theortem \ref{thm-1} for the weight $w(v; \alpha_0, p)$, it also satisfies property (ii) with the weight function $w(v; \alpha_2, p)$ since $\alpha_2<\alpha_0$ (see Remark \ref{app-rem}). Therefore, for every time $t$ the norm $L^\infty_{w(v; \alpha_2, p)}$ of $f(t,v)$, i.e. $m_{\alpha_2, p}(t)$, is attained for some velocity $v$. Let $v_0$ be such velocity corresponding to time $t_0$. In other words, 
\begin{align}\label{eq-achieved}
m_{\alpha_2, p}(t_0) = f(t,v_0) \; w(v_0; \alpha_2 ,p) = a + b t_0^{-d/\nu}.
\end{align}
Hence,
\begin{align} \nonumber
& f(t,v_0) \; w(v_0; \alpha_2 ,p) < a + b t^{-d/\nu}, \quad  \forall t < t_0,\\
& f(t_0,v_0) \; w(v_0; \alpha_2 ,p) = a + b t_0^{-d/\nu}.
\end{align}
Therefore, 
\begin{align}\label{eq-deriv}
\partial_t \left( f(t,v_0) \; w(v_0; \alpha_2 ,p) \right)_{t=t_0} \geq \partial_t \left( a + bt^{-d/\nu} \right)_{t=t_0}.
\end{align}
Combining \eqref{eq-achieved} and \eqref{eq-deriv}, we conclude the following lower bound at $(t_0, v_0)$
\begin{align}\label{eq-lower}
\partial_tf(t_0, v_0) \geq - \frac{d}{\nu} b^{-\nu/d} \; \frac{1}{ w(v_0; \alpha_2 ,p)} \; \left(m_{\alpha_2, p}(t_0) - a\right)^{1 + \frac{d}{\nu}}.
\end{align}

In the rest of the proof we look for an upper bound on $\partial_t f(t_0, v_0)$ using the Boltzmann equation \eqref{eq-be}. In particular, we estimate the collision operator $Q(f,f)(t_0, v_0)$. The upper bound that we will obtain will contradict \eqref{eq-lower} and will thus conclude our proof.

In the rest of the proof, if parameters of the weight function $w$ are not specified, they are assumed to be $\alpha_2$ and $p$.

\subsection{Splitting of the collisional operator}
When the Grad's cutoff is not assumed, it is often convenient to split the collisional integral into the following two terms, both of which are finite (\cite{de97}, \cite{al99}, \cite{vi99}, \cite{alvi02}, \cite{si14}) 
\begin{align}
& Q(f,f) = Q_1(f,f) + Q_2(f,f), \label{noncut-Q12-split}\\
& Q_1(f,f) \; = \; \int_{\R^d} \int_{\mathcal{S}^{d-1}} (f' - f) f'_* B \; d\sigma dv_* 
\; = \;  \int_{\R^d} (f' - f) K_f(v,v') \; dv',\\
&Q_2(f,f) \; = \; f(v) \; \int_{\R^d} \int_{\mathcal{S}^{d-1}} (f'_* - f_*) B \; d\sigma dv_*.
\end{align}

Since we study weighted norms, we introduce a new splitting of $Q$ tailored for the building blocks of our calculations, which are functions of the type $f w$. More precisely, 
we further split $Q_1$ into $Q_{1,1}$ and $Q_{1,2}$ according to $$Q_1 =  Q_{1,2} + Q_{1,2},$$ where
\begin{align}\label{eq-three qs}
&Q_{1,1}(f,f) \; = \; \frac{1}{w(v)} \; \int_{\R^d} \left( f' \; w' - f\; w\right) \; K_f(v,v') \; dv',\\
&Q_{1,2}(f,f) \; = \; \int_{\R^d} f' \; w' \; \left(\frac{1}{w'}  -\frac{1}{w}\right) \;  K_f(v,v') \; dv'.
\end{align}
Hence our overall decomposition of the collisional operator is
\begin{align}\label{eq-splitting}
Q(f,f) \; = \; Q_{1,1}(f,f) \; + \; Q_{1,2}(f,f) \; + \; Q_2(f,f).
\end{align}

This splitting helps us to identify the negative contribution within $Q_{1}$  at $(t_0, v_0)$. This negative contribution is coming from $Q_{1,1}(f,f)(t_0,v_0)$.
More precisely, recalling that at time $t=t_0$ the $L^\infty$ norm defining $m_{\alpha_2, p}(t_0)$ is attained at $v_0$, i.e. $m_{\alpha_2, p}(t_0) = \left\|f(t_0,v) \; w(v)\right\|_{L^\infty_v} = f(t_0, v_0) \, w(v_0)$. Therefore,
\begin{align}\label{eq-Q11}
Q_{1,1}(f,f)(t_0,v_0) \; = \; -  \frac{1}{w(v_0)} \; \int_{\R^d} \left( m_{\alpha_2, p}(t_0) - f(t_0,v') \; w(v') \right) \; K_f(v_0,v') \; dv'.
\end{align}
Since the integrand is a positive function, $Q_{1,1}(f,f)(t_0, v_0)$ is negative. However this information is not sufficient, and we proceed 
to obtain a precise upper bound on  $Q_{1,1}(f,f)(t_0, v_0)$, as well as on the other two terms. That is what we do below.

\subsection{Estimating $\boldsymbol{ Q_{1,1}}$}
As noted above, $Q_{1,1}(f,f)$ is negative at  $(t_0, v_0)$. To estimate how negative it is, we reduce the domain of integration to the cone $\cC(v_0)$ on which the lower bound \eqref{eq-lower Kf} on $K_f$ is known to hold. This cone was introduced in Lemma \ref{si2} and Remark \ref{cone}. This yields
\begin{align}\label{eq-Q11 post 7.1}
Q_{1,1}(t_0, v_0) \leq - C \;
\frac{\langle v_0 \rangle^{1+\gamma+\nu} }{w(v_0)} \; 
\int_{\cC(v_0)} \left( m_{\alpha_2, p}(t_0) - f(t_0,v') \; w(v')\right) \; |v'-v_0|^{-d-\nu} \; dv'.
\end{align}
The above integral, over the cone $\cC(v_0)$, is then estimated using Lemma \ref{si3} with $g = f w$ and its maximum value $\tilde{m} = m_{\alpha_2, p}(t_0)$. This implies
\begin{align}\label{eq-Q11 post 7.2}
Q_{1,1}(f,f)(t_0,v_0) \; \leq \; - C  \frac{ \langle v_0 \rangle^{1+\gamma+\nu}}{w(v_0)} \;  (m_{\alpha_2, p}(t_0))^{1+\nu/d} \;\frac{\left(\frac{1}{\langle v_0 \rangle}\right)^{1+\nu/d}}{\left(\int_{\cC(v_0)} f' \; w' dv'\right)^{\nu/d}}.
\end{align}
We proceed the estimate by considering the above integral in two cases, when $|v_0|\le R$ and when $|v_0|>R$, where the number $R$ is determined in the following way. Recall the statement in Lemma \ref{si2} (iii) according to which for every $\sigma \in A(v_0)$, where $A(v_0)$ is the symmetric subset of the unit sphere that determines the cone $\cC(v_0)$, we have $|\sigma \cdot v_0| \leq C$. This means that set $A(v_0)$ lies in a band of the unit sphere of width at most $C/|v_0|$ ``around the largest circle on the sphere belonging to the hyperplane that is perpendicular to" $v_0$. Hence, the larger $|v_0|$ is, the thinner the band is. Therefore, there exists number $R$ (depending on $C$), as is noted in \cite{si14}, such that
\begin{align}\label{R}
| v' | > \frac{| v_0 |}{2}, \quad \mbox{whenever} \; v'\in \cC(v_0) \; \mbox{and} \; |v_0|>R.
\end{align}
{\bf Case 1: $\boldsymbol{|v_0| \leq R}$}. It immediately follows that
\begin{align} \label{case1-1}
\frac{1}{\langle v_0 \rangle} \ge \frac{1}{\langle R \rangle },
\end{align}
and consequently
\begin{align} \label{case1-2}
\frac{1}{w(v_0)} \ge \frac{1}{w(R)}.
\end{align} 
due to the property (P1) according to which the weight $w$ is strictly positive and radially decreasing.  
In addition, since $\alpha_2 < \alpha_1$, by \eqref{pf-l1}  we have 
\begin{align} \label{case1-3}
\int_{\cC(v_0)} f' \; w' dv' \le \int_{\R^d} f' \; w' dv' & = \|f(t,v) \, w(v;\alpha_2, p)\|_{L^1_v} \nonumber\\
& \leq  \|f(t,v) \, w(v;\alpha_1, p)\|_{L^1_v} \le C_1,
\end{align}
 Applying estimates \eqref{case1-1}-\eqref{case1-3} to \eqref{eq-Q11 post 7.2} yields the following estimate on $Q_{1,1}$ whenever  $|v_0| \leq R$
\begin{align}\label{eq- Q11 small v}
Q_{1,1}(f,f)(t_0,v_0) \; & \leq \;  - \frac{C_R \; \langle v_0 \rangle^{1+\gamma+\nu} \;  (m_{\alpha_2, p}(t_0))^{1+\nu/d}}{\left(\|f(t,v) \, w(v;\alpha_1, p)\|_{L^1_v}\right)^{\nu/d}}  
\end{align}
where $C_R$ also depends on $R$.

{\bf Case 2: $\boldsymbol{|v_0| > R}$.} Now we need a more refined bound on $\int_{\cC(v_0)} f' \; w' dv'$ than the one given by \eqref{case1-3}. To find such a bound, recall from \eqref{R} that for $|v_0|>R$ and for any $v' \in \cC$
\begin{align}\label{eq-v'v0}
|v_0| \;< \; 2|v'|.
\end{align}
For $w(v_0, \alpha_3, p)$, where $\alpha_3$ will be chosen below, we have
\begin{align} 
\int_{\cC} f'& w (v'; \alpha_2, p) \; dv' \;
= \; \int_{\cC} f' \; w(v'; \alpha_2, p) \frac{w(v_0; \alpha_3, p)}{w(v_0; \alpha_3, p)} \; dv' \nonumber\\ 
& \leq \; \frac{1}{w(v_0; \alpha_3, p)} \; \int_{\cC} f' \; w(v'; \alpha_2, p) \; w(2v'; \alpha_3, p) \; dv' \label{case2-1}\\ 
& \leq \;  \frac{1}{w(v_0; \alpha_3, p)} \; \int_{\cC}f' \; w(v'; \alpha_2 + c_2 \alpha_3, p) \; dv' \label{case2-2}\\ 
& \leq \;  \frac{\|f(t,v) \, w(v;\alpha_2 + c_2 \alpha_3, p)\|_{L^1_v}}{w(v_0; \alpha_3, p)}  \nonumber \\
& \leq \;  \frac{\|f(t,v) \, w(v;\alpha_1, p)\|_{L^1_v}}{w(v_0; \alpha_3, p)} \label{case2-3a}
\end{align}
which is uniformly bounded by \eqref{pf-l1}. To obtain \eqref{case2-1} we used monotonicity  of $w$ with respect to $v$ secured by property (P1).  
To obtain \eqref{case2-2} we used the property (P2). The inequality \eqref{case2-3a} holds provided that $\alpha_3>0$ satisfies
\begin{align}\label{a3-1}
\alpha_2 + c_2 \alpha_3 < \alpha_1.
\end{align}

Now we estimate \eqref{eq-Q11 post 7.2} using  \eqref{case2-3a} 
\begin{align} \nonumber
Q_{1,1}(f,f)(t_0, v_0) \;
& \leq \; - C \; \frac{\langle v_0 \rangle^{1+\gamma+\nu}}{w(v_0; \alpha_2, p)} \;(m_{\alpha_2, p}(t_0))^{1+\nu/d} \; \frac{ \left(\frac{1}{\langle v_0 \rangle} \right)^{1+\frac{\nu}{d}}}{\left( \frac{\|f(t,v) \, w(v;\alpha_1, p)\|_{L^1_v}}{w(v_0; \alpha_3, p)}\right)^{\nu/d}}\\\nonumber 
& = \; - \frac{C \; \langle v_0 \rangle^{1+\gamma+\nu}  \; (m_{\alpha_2, p}(t_0))^{1 + \frac{\nu}{d}}}{\left( \|f(t,v) \, w(v;\alpha_1, p)\|_{L^1_v}\right)^{\nu/d}}  \; \langle v_0 \rangle^{-1- \frac{\nu}{d}}\; 
\frac{\left( w(v_0; \alpha_3, p)\right)^{\nu/d}}{w(v_0; \alpha_2, p)}  \\ \label{case2-4} 
& \le \; - \frac{C \; \langle v_0 \rangle^{1+\gamma+\nu}  \; (m_{\alpha_2, p}(t_0))^{1 + \frac{\nu}{d}}}{\left( \|f(t,v) \, w(v;\alpha_1, p)\|_{L^1_v}\right)^{\nu/d}}  \; \langle v_0 \rangle^{-1- \frac{\nu}{d}}\; \langle v_0 \rangle^2 \\ \label{case2-5}
& \le  \;  - \frac{C \; \langle v_0 \rangle^{1+\gamma+\nu}  \; (m_{\alpha_2, p}(t_0))^{1 + \frac{\nu}{d}}}{\left( \|f(t,v) \, w(v;\alpha_1, p)\|_{L^1_v}\right)^{\nu/d}},
\end{align}
where to obtain \eqref{case2-4} we use the property (P3) according to which 
$$\frac{w(v_0; \alpha_3, p)^{\nu/d}}{w(v_0; \alpha_2, p)} \geq C \langle v \rangle^2,$$ 
provided that
\begin{equation}\label{a3-2}
\frac{\alpha_3 \;\nu}{d}> \alpha_2.
\end{equation}
Now we pause for a moment to choose $\alpha_3$ to satisfy \eqref{a3-1} and \eqref{a3-2}. In particular, we choose $\alpha_3$ such that 
$$
\frac{\alpha_3 \, \nu}{d} = 2 \alpha_2,
$$
which automatically satisfies \eqref{a3-2}. Then \eqref{a3-1} implies the condition on $\alpha_2$
\begin{align}
\alpha_2 < \frac{\alpha_1}{1 + \frac{2 \, c_2 \, d}{\nu}}.
\end{align}

For such $\alpha_2$, the estimates \eqref{eq- Q11 small v} and \eqref{case2-5} imply
\begin{align}\label{q11}
Q_{1,1}(f,f)(t_0,v_0) \leq
  - \frac{C_R \; \langle v_0 \rangle^{1+\gamma+\nu} \;  (m_{\alpha_2, p}(t_0))^{1+\nu/d}}{\left(\|f(t,v) \, w(v;\alpha_1, p)\|_{L^1_v}\right)^{\nu/d}}.
\end{align}

\subsection{Estimating $Q_{1,2}$}
 Recall the definition of $Q_{1,2}$ from \eqref{eq-three qs}
\begin{align}\nonumber
Q_{1,2}(f,f) \; = \; \int_{\R^d} f' \;w' \; ( \frac{1}{w'} - \frac{1}{w}) \;  K_f(v,v') \; dv'.
\end{align}
We start by a simple observation. Since $m_{\alpha_2, p}(t)$ is defined as a supremum of $f(t,v) \; w(v)$ over velocities $v$, we have $f' w' \le m_{\alpha_2, p}$ for every $v'\in\R^d$. Therefore, 
\begin{align} \nonumber
Q_{1,2}&(f,f)(t,v) \\ \nonumber
& \leq \; m_{\alpha_2, p}(t) \; \int_{\R^d} ( \frac{1}{w'} - \frac{1}{w}) \; K_f(v,v') \; dv' \\ \label{eq-Q12 first} 
& = \; m_{\alpha_2, p}(t) \; \int_{\R^d} \left( \frac{1}{w(v+z)} - \frac{1}{w(v)} \right) \; K_f(v,v+z) \; dz.
\end{align}

Since the kernel $K_f(v,v+z)$ has a singularity at $z=0$, we estimate the above integral inside the unit ball and outside the unit ball separately, using different bounds on  $ \frac{1}{w(v+z)} - \frac{1}{w(v)}$ in those regions.

\bigskip

{\bf Outside the unit ball.} 
Since the singularity of $K_f(v, v+z)$ is at $z=0$, which is outside the considered region, a coarse bound 
$$
\left|\frac{1}{w(v+z)} -  \frac{1}{w(v)} \right| \le C,
$$ 
which follows from the property (P1). 
Applying Lemma \ref{si1}, followed by a sperical change of coordinates, yields
\begin{align}\nonumber
& \int_{|z|>1}  \left| \frac{1}{w(v+z)} - \frac{1}{w(v)}\right|  \; K_f(v,v') \; dv' \\  \nonumber
& \quad \leq \; C \; \int_{|z|>1}   \left(\int_{\{w: w \cdot z =0\}} f(v+w) |w|^{1+\gamma+\nu} \; dw \right) \; |z|^{-d - \nu} \; dz \\ \nonumber
& \quad = \; C \;  \int_1^\infty \int_{\mathcal{S}^{d-1}}  \left(\int_{\{w: w \cdot z =0\}} f(v+w) |w|^{1+\gamma+\nu} \; dw \right) \; \rho^{-d - \nu } \; \rho^{d-1} \; dS(z) \; d\rho \\ \nonumber
& \quad = \; C \; \left( - \rho^{-\nu}\right)_1^\infty \; \int_{\mathcal{S}^{d-1}}  \left(\int_{\{w: w \cdot z =0\}} f(v+w) |w|^{1+\gamma+\nu} \; dw \right) \; dS(z) \\ \label{q12-1}
& \quad = C \;\int_{\R^d} f(v+y) \; |y|^{\gamma+\nu} \; dy \\  \label{q12-2}
& \quad\leq \; C \langle v \rangle^{\gamma+\nu},
\end{align}
where to obtain \eqref{q12-1} we applied Lemma \ref{lm-change} and used the fact that
\begin{align}
 \quad \nu >0.
\end{align}
The inequality \eqref{q12-2} follows from the change of variables combined with the generation of polynomial moments and conservation of mass.

\bigskip

{\bf Inside the unit ball $\boldsymbol{|z|<1}$.} Here we need a better bound on  $\left| \frac{1}{w(v+z)} -\frac{1}{w(v)}\right|$ to compensate for the singularity of $K_f(v, v+z)$ at $z=0$.
By the mean-value theorem, we have for some $t \in [0,1]$
\begin{align}
	 \left| \frac{1}{w( v+z)} -\frac{1}{w( v)}\right| \nonumber
	& = \; \left| \nabla \left( \frac{1}{w} \right)( t v  + (1-t) (v+z)) \cdot(v+ z - v) \right| \\ \label{inside-1}
	& \leq \; C \left(  t\langle v \rangle + (1-t) \langle v+z \rangle\right)\,|z| \\ \label{inside-2}
	& \le \; \left( \langle v \rangle + |z|\right) \, |z| \\ \nonumber
	& \le 2 \langle v \rangle \; |z|,
\end{align}
where \eqref{inside-1} follows from the property (P4), while the inequality \eqref{inside-2} follows from an elementary inequality $\langle v+z \rangle \le \langle v \rangle + |z|$. Therefore, applying again Lemma \ref{si1} and spherical change of coordinates yields
\begin{align} \nonumber
& \int_{B_1} \left| \frac{1}{w(v+z)} -\frac{1}{w(v)}\right|  \; K_f(v,v') \; dv' \\  \nonumber
& \leq \; C \; \langle v \rangle \; \int_{B_1} \left(\int_{\{w: w \cdot z =0\}} f(v+w) |w|^{1+\gamma+\nu} \; dw \right) \; |z|^{-d - \nu +1} \; dz \\ \nonumber
& = \;  C \; \langle v \rangle \; \int_0^1 \int_{\mathcal{S}^{d-1}}  \left(\int_{\{w: w \cdot z =0\}} f(v+w) |w|^{1+\gamma+\nu} \; dw \right) \; \rho^{-d - \nu +1} \; \rho^{d-1} \; dS(z) \; d\rho \\ \nonumber
& = \;  C \; \langle v \rangle \; \left( \rho^{1-\nu}\right)_0^1 \; \int_{\mathcal{S}^{d-1}}  \left(\int_{\{w: w \cdot z =0\}} f(v+w) |w|^{1+\gamma+\nu} \; dw \right) \; dS(z) \\ \nonumber
& = \; C \; \langle v \rangle \; \int_{\R^d} f(v+y) \; |y|^{\gamma+\nu} \; dy \\ \nonumber
& \leq C \; \langle v \rangle \; \left( C + C \langle v \rangle^{\gamma + \nu}\right) \\ \nonumber
& \leq C \; \langle v \rangle^{1+\gamma+\nu}.
\end{align}
Note that for this calculation to work we need that 
\begin{align}
 \nu \leq 1.
\end{align}

In conclusion, combining the bounds obtained for the inside and outside the ball regions, we get
\begin{align}\label{q12}
Q_{1,2}(f,f)(t,v) \; \leq \; C \; m(t) \;  \langle v \rangle^{1+\gamma+\nu}.
\end{align}

\subsection{Estimating $Q_{2}$}
Recall that $Q_2$ is defined as
$$
Q_2(f,f) \; = \; f(v) \; \int_{\R^d} \int_{\mathcal{S}^{d-1}} (f'_* - f_*) B \; d\sigma dv_*.
$$
It is well-known, from the pioneering work on cancelation properties, by Alexandre, Desvillettes, Villani and Wennber \cite{aldeviwe00}, that the above double integral can be represented as a convolution operator. Thus, $Q_2$ takes the following simplified form
\begin{align}\label{q2}
Q_2(f,f)(t,v) \; = \; (\tilde{B}*f)(v) \; f(v)
\end{align}
where 
\begin{align}
\tilde{B}(v) = C |v|^\gamma,
\end{align}
where $\gamma$ is the potential rate from the collision kernel, and $C$ is a dimensional constant depending on the angular kernel.
Because of this simplified representation, one then has the following estimate on $Q_2$
\begin{align}\label{q2}
Q_2(f,f)(t,v) \; = \; (\tilde{B}*f)(v) \; f(v) \; \leq \; 
\begin{cases}
C \; m_{\alpha_2, p}(t) \; \langle v \rangle^\gamma, & \mbox{if} \; \gamma \geq 0\\
C \;( m_{\alpha_2, p}(t))^{1 -\frac{\gamma}{d}}, & \mbox{if} \; \gamma<0.
\end{cases}
\end{align}

\bigskip
\subsection{Conclusion} In summary, the following are the estimates \eqref{q11}, \eqref{q12}, \eqref{q2} of all three parts  \eqref{eq-three qs} of the collisional operator
\begin{align}\nonumber
Q_{1,1}(f,f)(t_0,v_0) \; & \leq \; - \frac{C_R \; \langle v_0 \rangle^{1+\gamma+\nu} \;  (m_{\alpha_2, p}(t_0))^{1+\nu/d}}{\left(\|f(t,v) \, w(v;\alpha_1, p)\|_{L^1_v}\right)^{\nu/d}},\\ \nonumber
Q_{1.2}(f,f)(t_0, v_0) \; & \leq \; C \; m_{\alpha_2, p}(t_0) \langle v_0 \rangle^{1+\gamma+\nu},\\ \nonumber
Q_2(f,f)(t_0, v_0) \; & \leq \; C \; m_{\alpha_2, p}(t_0) \;  \langle v_0 \rangle^{\gamma}.
\end{align}

Combining the three estimates yields
\begin{align} 
Q(f,f)(t_0, v_0) \; 
& \leq \; - \frac{C_R \; \langle v_0 \rangle^{1+\gamma+\nu} \;  (m_{\alpha_2, p}(t_0))^{1+\nu/d}}{\left(\|f(t,v) \, w(v;\alpha_1, p)\|_{L^1_v}\right)^{\nu/d}} +  \; C \; m_{\alpha_2, p}(t_0) \;  \langle v_0 \rangle^{1+\gamma+\nu} \nonumber \\
 \nonumber
& = \; \left(  - \frac{C_R \;  (m_{\alpha_2, p}(t_0))^{1+\nu/d}}{\left(\|f(t,v) \, w(v;\alpha_1, p)\|_{L^1_v}\right)^{\nu/d}} + C\; m_{\alpha_2, p}(t_0)\right)  \langle v_0 \rangle^{1+\gamma+\nu} \\ 
& \leq \;  - \frac{1}{2}\frac{C_R \;  (m_{\alpha_2, p}(t_0))^{1+\nu/d}}{\left(\|f(t,v) \, w(v;\alpha_1, p)\|_{L^1_v}\right)^{\nu/d}}  \;  \langle v_0 \rangle^{1+\gamma+\nu} \label{concl-1}\\
& \le  \; -\frac{1}{2}   - \frac{C_R \;  (m_{\alpha_2, p}(t_0))^{1+\nu/d}}{\left(\|f(t,v) \, w(v;\alpha_1, p)\|_{L^1_v}\right)^{\nu/d}} , \label{concl-2}
\end{align}
where the inequality \eqref{concl-1} holds provided that
\begin{align*}
\left(\frac{2C}{C_R} \left(\|f(t,v) \, w(v;\alpha_1, p)\|_{L^1_v}\right)^{\nu/d} \right)^{d/\nu} \leq m_{\alpha_2, p}(t_0) = a + b t_0^{-d/\nu},
\end{align*}
that is,
\begin{align}\label{pf-cond2}
\left(\frac{2C}{C_R} \right)^{d/\nu} \|f(t,v) \, w(v;\alpha_1, p)\|_{L^1_v} \leq a + b t_0^{-d/\nu}.
\end{align}
We choose $a$ to be 
\begin{align}
a &:= \left(\frac{2C}{C_R} \right)^{d/\nu} \|f(t,v) \, w(v;\alpha_1, p)\|_{L^1_v}\\
& \leq \left(\frac{2C}{C_R} \right)^{d/\nu} \;  C_1 .
\end{align}
For such $a$ \eqref{pf-cond2} is automatically satisfied.
Now, let us recall  \eqref{eq-lower} 
\begin{align}
Q(f,f)(t_0, v_0) = \partial_t f(t_0, v_0) \; \geq \; -\frac{d}{\nu} b^{-\nu/d} \; (m_{\alpha_2, p}(t_0) - a)^{1+\frac{\nu}{d}}.
\end{align}
Hence, if we choose $b$ so that
\begin{align}
\frac{c}{2} = \frac{d}{\nu} b^{-\nu/d},
\end{align}
we get the contradiction with the upper bound \eqref{concl-2}.


\section{Examples of weight functions and the proof of Corollary \ref{cor}}\label{sec:ex}

We provide several examples of functions that satisfy properties (P1)-(P4) and to which Theorem \ref{thm-1} can be applied, as will be proved bellow.

\noindent {\bf Example 1.}
$$w_1(v; \alpha, p) =  e^{\alpha \langle v \rangle^p}.$$

Now we proceed to check that $w_1$ indeed satisfies (P1)-(P4). It is easy to see that $w_1(v; \alpha, p)$ is strictly positive, radially increasing in $v$, and increasing in $\alpha$. Therefore it satisfies property (P1). 

Next, note that for any $\alpha_1, \alpha_2, p >0$ we have 
\begin{align*}
	w_1(v; \alpha_1, p) \; w(2v; \alpha_2, p)  
	& =  e^{\alpha_1 \langle v \rangle^p} \; e^{\alpha_2 \langle 2v \rangle^p}\\
	& \leq e^{(\alpha_1 + 2^p\alpha_2) \langle v \rangle^p}\\
	& = w_1(v; \; \alpha_1 + 2^p \alpha_2, \; p),
\end{align*}
thus $w_1$ satisfies condition (P2) as well. 

To check that condition (P3) holds, let $\delta \in [0,1]$, and let $\alpha_1, \alpha_2, p >0$ and $k\ge 0$. If $\delta \alpha_1  < \alpha_2$, then   
\begin{align*}
	\frac{w_1(v; \alpha_1, p)^\delta}{w_1(v; \alpha_2,p)} 
	& = \frac{e^{\delta \alpha_1 \langle v \rangle^p}}{e^{\alpha_2 \langle v \rangle^p}}\\
	& = \;  e^{(\delta \alpha_1 - \alpha_2) \langle v \rangle^p}\\
	& \leq C D \, \langle v \rangle^k,
\end{align*}
where $C$ is a constant that depends on parameters $k, \delta, \alpha_1, \alpha_2, p$. The last inequality holds because $\delta \alpha_1 - \alpha_2 < 0$, so the exponential $e^{(\delta \alpha_1 - \alpha_2) \langle v \rangle^p}$ decays faster than any polynomial.

Similarly, if $\delta \alpha_1  > \alpha_2$, then   
\begin{align*}
	\frac{w_1(v; \alpha_1, p)^\delta}{w_1(v; \alpha_2,p)} 
	& =   e^{(\delta \alpha_1 - \alpha_2) \langle v \rangle^p}\\
	& \ge D \, \langle v \rangle^k,
\end{align*}
where $D$ is a constant that depends on parameters $k, \delta, \alpha_1, \alpha_2, p$. The last inequality holds because $\delta \alpha_1 - \alpha_2 > 0$, so the exponential $e^{(\delta \alpha_1 - \alpha_2) \langle v \rangle^p}$ grows faster than any polynomial. In conclusion, $w_1$ satisfies condition (P3).

Finally, it is easy to check that for any $\alpha, p >0$ we have
\begin{align*}
\left| \nabla_v \left(\frac{1}{w_1(v; \alpha, p) } \right) \right| 
	& = \left| \nabla_v \left( e^{-\alpha \langle v \rangle^p} \right) \right| \\
	& \le |v| \; \left (\alpha p \; \langle v \rangle^{p-2} \; e^{-\alpha \langle v \rangle^p} \right)\\
	& \le \langle v \rangle \; \left( q + \alpha p \; \langle v \rangle^{p-2} \frac{1}{\alpha \langle v \rangle^p}\right)\\
	& \le (q+p) \langle v \rangle.
\end{align*}
Therefore $w_1$ satisfies property (P4).

\noindent{\bf Example 2.}
Second example are Mittag-Leffler functions
$$
w_2(v; \alpha, p) = \cE_{2/p} (\alpha^{2/p} \langle v \rangle^2).
$$
For simplicity we now verify that $w_2(v;\alpha, p)$, i.e. a Mittag-Leffler function, satisfies (P1)-(P4), because those functions are used in Corollary \ref{cor}.

 Recall from \eqref{eq-asym1} that 
$$
c e^{\alpha \langle v \rangle^p} \le \cE_{2/p} (\alpha^{2/p} \langle v \rangle^2) \le C e^{\alpha \langle v \rangle^p}.
$$
Using this equivalence relation and properties of classical exponential functions proved in Example 1, it is easy to check that a Mittag-Leffler function $w_2(v; \alpha, p)$ satisfies first three properties (P1)-(P3). It remains to show that it satisfies condition (P4) as well.
\begin{align*}
	\left|  \nabla_v \left(\frac{1}{\cE_{2/p} (\alpha^{2/p} \langle v \rangle^2)} \right)\right| 
	& = \left| \nabla_v \left(\sum_{k=0}^\infty \frac{\alpha^{2k/p} \; \langle v \rangle^{2k}}{\Gamma(\frac{2k}{p} +1)} \right)^{-1}\right| \\
	& \le \left(\sum_{k=1}^\infty \frac{2k \alpha^{2k/p}\; \langle v \rangle^{2k-1}}{\Gamma(\frac{2k}{p}+1)}\right) \; \left(\sum_{k=0}^\infty \frac{\alpha^{2k/p} \; \langle v \rangle^{2k}}{\Gamma(\frac{2k}{p} +1)} \right)^{-2}\\
	& \le p \left( \sum_{k=1}^\infty \frac{\alpha^{2k/p} \; \langle v \rangle^{2k-1}}{\Gamma(\frac{2k-2}{p} + 1)}\right) \left(\sum_{k=0}^\infty \frac{\alpha^{2k/p} \; \langle v \rangle^{2k}}{\Gamma(\frac{2k}{p} +1)} \right)^{-2},
\end{align*}
where in the last inequality we used  
$$ \frac{2k}{\Gamma(\frac{2k}{p} +1)} = \frac{p}{\Gamma(\frac{2k}{p})}  \le \frac{p}{\Gamma(\frac{2k-2}{p}+1)}.$$
Therefore, by simple algebraic manipulations, we get
\begin{align*}
\left|  \nabla_v \left(\frac{1}{\cE_{2/p} (\alpha^{2/p} \langle v \rangle^2)} \right)\right| 
	& \le p \; \alpha^{2/p} \; \langle v \rangle  \left( \sum_{k=1}^\infty \frac{\alpha^{(2k-2)/p} \; \langle v \rangle^{2k-2}}{\Gamma(\frac{2k-2}{p} + 1)}\right) \left(\sum_{k=0}^\infty \frac{\alpha^{2k/p} \; \langle v \rangle^{2k}}{\Gamma(\frac{2k}{p} +1)} \right)^{-2}\\
	& = p \; \alpha^{2/p} \; \langle v \rangle  \left( \sum_{k=0}^\infty \frac{\alpha^{2k/p} \; \langle v \rangle^{2k}}{\Gamma(\frac{2k}{p} + 1)}\right) \left(\sum_{k=0}^\infty \frac{\alpha^{2k/p} \; \langle v \rangle^{2k}}{\Gamma(\frac{2k}{p} +1)} \right)^{-2} \\
	& =  p \; \alpha^{2/p} \; \langle v \rangle  \left(\sum_{k=0}^\infty \frac{\alpha^{2k/p} \; \langle v \rangle^{2k}}{\Gamma(\frac{2k}{p} +1)} \right)^{-1}\\
	& \le   p \; \alpha^{2/p} \; \langle v \rangle; 
\end{align*}
hence the property (P4) holds for the Mittag-Leffler function $w_2(v, \alpha, p)$.

We are now in a position to prove Corollary \ref{cor}.

\begin{proof}[Proof of Corollary \ref{cor}]
We provide details of the proof of (a). Part (b) can be proved in an  analogous way.  First we observe that $$C e^{\alpha \langle v \rangle^p} = C w_1(v; \alpha, p),$$
where $w_1$ is the function introduced in Example 1. Therefore we know that $C e^{\alpha \langle v \rangle^p} $ satisfies (P1)-(P4). On the other hand, by  Theorem \ref{thm},  
the propagation condition \eqref{pw} of Theorem \ref{thm-1} is satisfied. The claim follows from an application of Theorem \ref{thm-1}.

\end{proof}


\appendix

\section{Some known technical results}

\begin{rem} \label{app-rem}
For any $\alpha_1, \alpha_2, p >0$ with $\alpha_2 < \alpha_1$ we have that if $f$
satisfies assumption (ii) of Theorem \ref{thm-1} with the weight $w(\cdot; \alpha_1, p)$, then it satisfies the same assumption with the weight  $w(\cdot; \alpha_2, p)$.
To prove this claim, 
suppose that for every time $t$, $\|f(t,v) \; w(v; \alpha_1, p)\|_{L^\infty_v}$ is continuous in t,  finite (not necessarily uniformly in time) and the norm is attained for some $v$. 

Thanks to the property (P3), for every $t$ we have
\begin{align}\label{rem-decay}
f(t,v) \; w(v; \alpha_2, p)
& = f(t,v) \; w(v; \alpha_1, p) \frac{ w(v; \alpha_2, p)}{ w(v; \alpha_1, p)} \nonumber\\
& \leq C f(t,v) \; w(v; \alpha_1, p) \frac{1}{\langle v \rangle^2} \nonumber\\
& \leq \frac{C}{\langle v \rangle^2} \|f(t,v) \; w(v; \alpha_1, p)\|_{L^\infty_v}. 
\end{align}
Therefore, $\|f(t,v) \; w(v; \alpha_2, p)\|_{L^\infty_v}$ is finite for every $t$.

 Next, we show that $\|f(t,v) \; w(v; \alpha_2, p)\|_{L^\infty_v}$ is continuous in $t$. This will be proved using the continuity of $\|f(t,v) \; w(v; \alpha_1, p)\|_{L^\infty_v}$  in t as well as the property  (P3). Namely, suppose, on contrary, that  $\|f(t,v) \; w(v; \alpha_2, p)\|_{L^\infty_v}$ is continuous at some $t_0$. Then: 
$\exists \varepsilon,  \,\, \forall n\in\N, \,\, \exists t_n$ such that 
\begin{align*}
|t_0-t_n| <\frac{1}{n} \quad \mbox{and} \quad
\left|  \|f(t_0,v) w(v,\alpha_2,p)\|_{L^\infty_v}  -    \|f(t_n,v) w(v,\alpha_2,p)\|_{L^\infty_v} \right| >\varepsilon.
\end{align*}
Since, 
\begin{align*}
& \left| \|f(t_0,v) \; w(v; \alpha_2, p)\|_{L^\infty_v} - \|f(t_n,v) \; w(v; \alpha_2, p)\|_{L^\infty_v} \right| \\
& \qquad \leq \|(f(t_0,v)-f(t_n,v)) \; w(v; \alpha_2, p)\|_{L^\infty_v},
\end{align*}
we have that $\exists \varepsilon,  \,\, \forall n\in\N, \,\, \exists t_n$ such that 
\begin{align*}
|t_0-t_n| <\frac{1}{n} \quad \mbox{and} \quad
 \|(f(t_0,v)-f(t_n,v)) \; w(v; \alpha_2, p)\|_{L^\infty_v} >\varepsilon.
\end{align*}
Moreover, by the definition of supremum, we now have that $\exists \varepsilon,  \,\, \forall n\in\N, \,\, \exists t_n, \,\, \exists v_n$ such that 
\begin{align} \label{ap-key}
|t_0-t_n| <\frac{1}{n} \quad \mbox{and} \quad
\left|(f(t_0,v_n)-f(t_n,v_n)) \; w(v_n; \alpha_2, p)\right| >\frac{\varepsilon}{2}.
\end{align}
Before we proceed, we show that such $v_n$ have to lie in a ball of finite and fixed radius. Namely, if we denote 
$$m_1(t) := \|f(t,v) w(v;\alpha_1,p)\|_{L^\infty_v},$$ then we have $f(t,v) \leq m_1(t) (w(v; \alpha_1, p))^{-1} $, and so from \eqref{ap-key}, we have
\begin{align*}
 \frac{\varepsilon}{2} & < \left( m_1(t_0) + m_1(t_n)\right)(w(v_n; \alpha_1, p))^{-1} w(v_n; \alpha_2, p)\\
& \leq C  \left( m_1(t_0) + m_1(t_n)\right) \frac{1}{\langle v_n \rangle^2},
\end{align*}
where in the last inequality we used property (P3) and therefore the constant $C$ depends only on $\alpha_1, \alpha_2$ and $p$. From here we conclude, due to the continuity of function $m_1(t)$ and form the fact that $|t_0-t_n|<\frac{1}{n}$, that for all $n \geq N_0$ (for some fixed $N_0$) we have that
\begin{align*}
\langle v_n \rangle^2 \leq \frac{2C (2m_1(t_0)+1)}{\varepsilon}.
\end{align*}
Thus, indeed,  for $n\geq N_0$, $v_n$ lie in a ball of a radius that depend only on the fixed quantities $\varepsilon, C (\alpha_1, \alpha_2, p), m_1(t_0)$. 

Now, going back to \eqref{ap-key}, we get a contradiction. Namely, $w(v_n; \alpha_2, p)$ is bounded in $n$ (since $v_n$ are bounded) and 
$f(t_0, v_n) - f(t_n,v_n)$ converges to zero as $n \rightarrow \infty$ since $f$ is countinuius in $(t,v)$, $|t_0-t_n| <\frac{1}{n}$ and $v_n$ lie in a fixed ball (and thus converge to some point). Therefore, 
$|(f(t_0,v_n)-f(t_n,v_n)) \; w(v_n; \alpha_2, p)| \rightarrow 0$ as $n \rightarrow \infty$, so \eqref{ap-key} cannot hold. So our assuption that $\|f(t,v) \; w(v; \alpha_2, p)\|_{L^\infty_v}$ is not continuous in $t$ was wrong.

In order to see that this supremum is achieved, fix an arbitrary time $t_0$, suppose that 
$$\|f(t_0,v) \; w(v; \alpha_2, p)\|_{L^\infty_v} = C_0$$ 
and suppose on contrary that this supremum is not attained. That is, suppose that there is a sequence $\{v_n\}_n$ so that 
\begin{align*}
&\|f(t_0,v_n) \; w(v_n; \alpha_2, p)\|_{L^\infty_v} <C_0\\
&\|f(t_0,v_n) \; w(v_n; \alpha_2, p)\|_{L^\infty_v} \rightarrow C_0, \quad \mbox{as} \;\; n \rightarrow \infty.
\end{align*}
Velocities $v_n$ cannot be inside of a ball $B_R$ of finite radius $R$, because then they would converge to some $v_*\in B_R$ and at that point we would have that  $\|f(t_0,v_*) \; w(v_*; \alpha_2, p)\|_{L^\infty_v} = C_0$, which would contradict the assumption that the supremum is not achieved. Hence, there exists a subsequence, which we still call $v_n$, so that $|v_n| \rightarrow \infty$ and
\begin{align*}
C_0/2 < \|f(t_0,v_n) \; w(v_n; \alpha_2, p)\|_{L^\infty_v} <C_0.
\end{align*}
This contradicts the decay in \eqref{rem-decay} as  $f(t_0,v_n) \; w(v_n; \alpha_2, p) \leq C \langle v_n \rangle^{-2} \rightarrow 0$, so the lower bound could not hold. This concludes the proof of the remark.
\end{rem}

We state a classical change of variable result. 

\begin{lemma} \label{lm-change}
Suppose $g$ is any non-negative function. Then
\begin{align}
\int_{\mathcal{S}^{d-1}} \int_{\{\omega: \omega \cdot \sigma = 0 \}} 
g(\omega) \; d\omega \; dS(\sigma)
\; = \; c_d \; \int_{\R^d} g(y) \; \frac{dy}{|y|}.
\end{align}
\end{lemma}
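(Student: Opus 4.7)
The plan is to decompose the inner integral over the hyperplane $\sigma^\perp$ into polar coordinates, swap the order of integration using the symmetry between orthogonal pairs of unit vectors, and then reassemble into spherical coordinates on $\R^d$.

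First I would write any $\omega \in \sigma^\perp$ as $\omega = r\tau$ with $r = |\omega| \ge 0$ and $\tau \in \mathcal{S}^{d-1} \cap \sigma^\perp$, so that $d\omega = r^{d-2}\,dr\,dS_{d-2}(\tau)$, where $dS_{d-2}$ is the surface measure on the $(d-2)$-sphere inside $\sigma^\perp$. This rewrites the left-hand side as
\begin{equation*}
\int_{\mathcal{S}^{d-1}} \int_0^\infty \int_{\mathcal{S}^{d-1}\cap \sigma^\perp} g(r\tau)\; r^{d-2}\, dS_{d-2}(\tau)\, dr\, dS_{d-1}(\sigma).
\end{equation*}
The key step is then to exchange the $\sigma$- and $\tau$-integrations. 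The set of pairs
\begin{equation*}
\mathcal{F} = \{(\sigma,\tau) \in \mathcal{S}^{d-1}\times\mathcal{S}^{d-1} : \sigma\cdot\tau = 0\}
\end{equation*}
is the flag manifold of orthonormal $2$-frames; it carries a natural measure that is invariant under the swap $(\sigma,\tau)\mapsto(\tau,\sigma)$ because the orthogonal group acts transitively and the defining condition is symmetric in $\sigma$ and $\tau$. Concretely, this means
\begin{equation*}
dS_{d-1}(\sigma)\,dS_{d-2}|_{\sigma^\perp}(\tau) \;=\; dS_{d-1}(\tau)\,dS_{d-2}|_{\tau^\perp}(\sigma).
\end{equation*}
Since the integrand $g(r\tau)$ does not depend on $\sigma$, the inner $\sigma$-integral after swapping just contributes the total measure of $\mathcal{S}^{d-1}\cap\tau^\perp$, which equals $|\mathcal{S}^{d-2}|$.

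After the swap, the expression reduces to
\begin{equation*}
|\mathcal{S}^{d-2}| \int_0^\infty \int_{\mathcal{S}^{d-1}} g(r\tau)\, r^{d-2}\, dS_{d-1}(\tau)\, dr,
\end{equation*}
and recognizing $r^{d-2} = r^{d-1}/r$ with $dy = r^{d-1}\,dr\,dS_{d-1}(\tau)$ in spherical coordinates $y = r\tau$ on $\R^d$, this is precisely
\begin{equation*}
|\mathcal{S}^{d-2}| \int_{\R^d} g(y)\,\frac{dy}{|y|}.
\end{equation*}
This identifies the dimensional constant as $c_d = |\mathcal{S}^{d-2}|$.

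The only nontrivial point is justifying the symmetry of the measure on $\mathcal{F}$; I would do this either by the $O(d)$-invariance argument above (the stabilizer of a pair $(\sigma,\tau) \in \mathcal{F}$ is the same as the stabilizer of $(\tau,\sigma)$, namely $O(d-2)$, so both iterated measures give the unique $O(d)$-invariant measure on $\mathcal{F}\cong O(d)/O(d-2)$ up to the same normalization), or by a direct Fubini argument on the indicator/test-function level using approximations $\int_{\mathcal{S}^{d-1}}\int_{\mathcal{S}^{d-1}} F(\sigma,\tau)\,\delta(\sigma\cdot\tau)\,dS_{d-1}(\sigma)\,dS_{d-1}(\tau)$. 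The only other technical point is the handling of $y=0$ (a null set) and applying Tonelli, which is immediate since $g \ge 0$.
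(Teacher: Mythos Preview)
Your proof is correct, and it even identifies the constant explicitly as $c_d = |\mathcal{S}^{d-2}|$. Note, however, that the paper does not actually prove this lemma: it is stated in the appendix as ``a classical change of variable result'' with no argument supplied, so there is nothing to compare your approach against. Your polar-coordinates-plus-flag-manifold-symmetry argument is the standard way to establish this identity, and the justification you give for the symmetry of the measure on $\mathcal{F}$ via $O(d)$-invariance (both iterated measures are $O(d)$-invariant with the same total mass $|\mathcal{S}^{d-1}|\cdot|\mathcal{S}^{d-2}|$, hence coincide) is clean and sufficient.
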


\section{Proofs of lemmas from Section 4.3} \label{ap-c}

\begin{wrapfigure}{r}{0.35 \textwidth}
\setlength{\unitlength}{1cm}
\begin{tikzpicture}[scale=0.45]
\centering
\begin{scope}
   	\draw (0,0) circle (4);
	\draw [->] (-4,0)-- (4,0);
	\node at (-4.5, -0.5) {$v_*$};
	
	\node at (4.2, -0.5) {$v$};
	\node at (3.5, 2.8) {$ v'$};

	\draw [->] (-3,-2.6) -- (3,2.6);
	\node at (-3.3, -3.3)  {$v'_*$};

	\draw[->] [dashed] (3, 2.6)--(-4,0);
	\node at (-.50, 1.9) {w};
	
	\draw (-1, 0) arc (0:20:3);
	\node at (-1.8, 0.3) {$\theta/2$};
	
	\draw (1.5, 0) arc (0:40:1.5);
	\node at (1, 0.3) {$\theta$};

\end{scope}
\centering
\end{tikzpicture}
\vspace{-0.9in}
\end{wrapfigure}
In this section we provide proofs of lemmas from Subsection 4.3. While these lemmas were established by Silvestre in \cite{si14}, we include their proofs here for completeness purposes.

\begin{proof}[Proof of Lemma \ref{si1}]
By Lemma \ref{car}, the kernel $K_f$ can be represented as
\begin{align}\label{ap-Kf}
K_f(v,v') \; = \; \frac{2^{d-1}}{|v'-v|} \; \int_{\{w: w\cdot (v'-v) = 0\}} f(v+w) \; r^{-d+2+\gamma} \;  b(\cos\theta) dw.
\end{align}

The following part of the integrand: $r^{-d+2+\gamma} \;  b(\cos\theta)$ is estimated by considering two cases - when $\cos\theta>0$ and when $\cos\theta<0$. But first, recall the notation $r=|v-v_*|$ and $w = v_* - v'$. Thus,
\begin{align}\label{recall}
|w| =  r \cos \frac{\theta}{2}, \qquad 
|v'-v|  = r \left|\sin \frac{\theta}{2}\right|.
\end{align}

{\it Case 1:} $\cos \theta>0.$ In this case $\theta/2 \in (-\pi/4, \pi/4)$, which via \eqref{recall} implies $\frac{\sqrt{2}}{2} r\leq |w| \leq r$. In other words, 
\begin{align}\label{ap-case1-1}
|w|\approx r.
\end{align}
On the other hand, by \eqref{eq-b} 
\begin{align*}
b(\cos\theta) 
&\approx (\sin \theta)^{-d+1-\nu} \\
&= \left(2 \sin\frac{\theta}{2} \cos\frac{\theta}{2} \right)^{-d+1-\nu}\\
&\approx \left(\sin \frac{\theta}{2}\right)^{-d+1-\nu},
\end{align*}
where to obtain the last equivalence we used \eqref{recall} and \eqref{ap-case1-1}. Therefore,
\begin{align}\label{ap-case1}
r^{-d+2+\gamma} \;  b(\cos\theta) 
& \approx r^{-d+2+\gamma} \left( \frac{|v'-v|}{r}\right)^{-d+1-\nu} \nonumber\\
& = r^{1+\nu +\gamma} |v'-v|^{-d+1-\nu}\nonumber\\
& \approx |w|^{1+\nu +\gamma} |v'-v|^{-d+1-\nu}.
\end{align}

{\it Case 2:} $\cos \theta<0.$ In this case $\theta/2 \in (\pi/4, \pi/2) \cup (-\pi/2, -\pi/4)$, which via \eqref{recall} implies $\frac{\sqrt{2}}{2} r \leq |v'-v| \leq r$. In other words, 
\begin{align}\label{ap-case2-1}
|v'-v| \approx r.
\end{align}
On the other hand, by \eqref{eq-b}, 
$$b(\cos\theta) 
\approx (\sin\theta)^{1+\gamma+\nu} 
\approx \left(\cos\frac{\theta}{2}\right)^{1+\gamma+\nu},$$ 
where the last equivalence follows from \eqref{recall} and \eqref{ap-case2-1}. Therefore,
\begin{align}\label{ap-case2}
r^{-d+2+\gamma} \;  b(\cos\theta) 
& \approx r^{-d+2+\gamma} \left( \frac{|w|}{r}\right)^{1+\gamma+\nu} \nonumber\\
& = r^{-d+1-\nu} |w|^{1+\gamma+\nu}\nonumber\\
& \approx |v'-v|^{-d+1-\nu} |w|^{1+\gamma+\nu}.
\end{align}

In both cases, $r^{-d+2+\gamma} \;  b(\cos\theta)$ is approximated in the same way. Applying estimates \eqref{ap-case1} and \eqref{ap-case2} to \eqref{ap-Kf}, we have
\begin{align}
K_f(v,v') \; \approx \; \left(\int_{\{w: w\cdot (v'-v) = 0\}} f(v+w)|w|^{1+\gamma+\nu} dw\right) |v'-v|^{-d-\nu},
\end{align}
which completes the proof of the lemma.
\end{proof}
\begin{remark}
We observe that in the proof of Lemma 4.4 the format  \eqref{eq-b} of $\tilde{b}$ was used to obtain \eqref{ap-case2}. 
\end{remark}
 In order to prove Lemma \ref{si2}, we need the following result.
 
\begin{lemma}\cite[Lemma 4.6]{si14}\label{L46}
Suppose $f$ is a non-negative function defined on $\R^d$ which satisfies
\begin{align}\label{meh1}
& \int_{\R^d} f(v) dv \geq M_1 \nonumber\\
& \int_{\R^d} |v|^2 f(v) dv \leq E_0\\
& \int_{\R^d} f(v) \log f(v) dv \leq H_0 \nonumber
\end{align}
for some positive constants $M_1, E_0,  H_0$. Then there exist positive constants $r, l, m$ (depending only on $M_1, E_0$ and $H_0$) such that
\begin{align}
|\{ v: f(v)>l\} \cap B_r|\geq m.
\end{align}
\end{lemma}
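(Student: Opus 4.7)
The approach is the standard De Giorgi-type comparison argument that plays the three bounds in \eqref{meh1} off one another: the energy bound localizes most of the mass into a ball, the entropy bound forbids concentration of mass at very large values of $f$, and the mass lower bound then forces a sub-level set of $f$ to have positive measure. The strategy proceeds in three short steps, each of which isolates one of the three assumed bounds. No additional structure on $f$ is needed.

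First I would use Chebyshev's inequality applied to the energy to obtain, for any radius $r>0$,
$$ \int_{|v|>r} f(v)\, dv \;\le\; \frac{1}{r^2}\int_{|v|>r} |v|^2 f(v)\, dv \;\le\; \frac{E_0}{r^2}. $$
Choosing $r = (4E_0/M_1)^{1/2}$ therefore forces $\int_{B_r} f \ge 3M_1/4$. This fixes the constant $r$ in terms of $M_1$ and $E_0$ only. Second, I would use the entropy bound to cut off the top of $f$. For any threshold $A > 1$, split the entropy integral over $B_r$ into $\{f>A\}$ and $\{f\le A\}$, and observe that $x\log x \ge 0$ on $[1,\infty)$ while $-x\log x \le e^{-1}$ on $[0,1]$. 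Hence
$$ H_0 \;\ge\; \int_{B_r\cap\{f>A\}} f\log f \;+\; \int_{B_r\cap\{f\le A\}} f\log f \;\ge\; (\log A)\int_{B_r\cap\{f>A\}} f \;-\; \frac{|B_r|}{e}. $$
Taking $A = A(M_1, E_0, H_0)$ large enough that $(H_0 + |B_r|/e)/\log A \le M_1/4$ yields $\int_{B_r\cap\{f>A\}} f \le M_1/4$. Third, choose $l := M_1/(4|B_r|)$, which trivially gives $\int_{B_r\cap\{f\le l\}} f \le l\,|B_r| \le M_1/4$.

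Combining the three estimates,
$$ \int_{B_r\cap\{l<f\le A\}} f \;\ge\; \frac{3M_1}{4} - \frac{M_1}{4} - \frac{M_1}{4} \;=\; \frac{M_1}{4}, $$
and since $f\le A$ on the region of integration this forces $|\{f>l\}\cap B_r| \ge M_1/(4A) =: m$. The only delicate step is the entropy estimate on the set where $f\le 1$, where $f\log f$ is negative: passing first through the energy-based localization is what allows the crude bound $|f\log f|\le e^{-1}$ to be integrated against the \emph{finite} measure $|B_r|$, and this dependence on a bounded domain of integration is the reason the three bounds must be used in exactly the order above. No other obstacle appears; the final constants $r, l, m$ depend, as required, only on $M_1, E_0, H_0$ (and implicitly on the dimension through $|B_r|$).
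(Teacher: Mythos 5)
Your proof is correct and takes essentially the same route as the paper's: Chebyshev on the energy to localize most of the mass in a fixed ball $B_r$, the entropy bound to chop off the contribution of the super-level set $\{f>A\}$, and the lower threshold $l$ to chop off $\{f\le l\}$, leaving a set of controllably positive measure. The only cosmetic difference is bookkeeping: you fix the upper threshold $A$ a priori in terms of $(M_1,E_0,H_0)$ and then read off $m=M_1/(4A)$, whereas the paper fixes $l$ first, introduces an intermediate threshold $T$ defined implicitly by $T\,|B_r\cap\{f>l\}|=M_1/8$, and then uses the entropy to bound $T\le e^{8H_0/M_1}$ a posteriori — the two bookkeeping schemes produce the same kind of constant. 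One remark: both your write-up and the paper's proof pass silently from $\int_{\R^d} f\log f \le H_0$ to $\int_{B_r}\! f\log f \le H_0$ (equivalently, to neglecting $\int_{B_r^c}\! f\log f$), which requires controlling the negative part of $f\log f$ on $B_r^c$ via the energy bound (e.g. $-f\log f\lesssim f^{1-\varepsilon}$ for $f<1$ plus Hölder against $\langle v\rangle^{2}$); this is standard and you are no worse off than the source on this point, but it is the one place in the argument where "only delicate step" does not yet dispose of all the delicacy.
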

\begin{proof}
First note that 
\begin{align*}
E_0 & \geq \int_{\R^d} |v|^2 f(v) dv \geq r^2 \int_{|v|>r} f(v) dv.
\end{align*}
Therefore, if we choose $r>0$ so that $E_0/r^2 < M_1/2$, then
\begin{align*}
\int_{|v|>r} f(v) dv \leq \frac{E_0}{r^2} < \frac{M_1}{2}.
\end{align*}
Consequently, using the first condition in \eqref{meh1}, we have
\begin{align}\label{eq-L46-Br}
\int_{B_r} f(v) dv \geq \frac{M_1}{2}.
\end{align}
Next, we choose $l>0$ small enough so that $l|B_r| < M_1/8 < M_1/4$. Then
\begin{align}
\int_{B_r \cap \{v: f(v)>l\}} f(v) dv
& = \int_{B_r} f(v) dv - \int_{B_r \cap \{v: f(v)\leq l\}} f(v) dv \nonumber\\
& \geq \frac{M_1}{2} - l |B_r \cap \{v: f(v)\leq l\}| \label{eq-L46-Br1}\\
& \geq  \frac{M_1}{2} - l |B_r| \nonumber\\
&\geq  \frac{M_1}{2} -  \frac{M_1}{4} =  \frac{M_1}{4}. \label{eq-L46-Brl},
\end{align}
where to obtain \eqref{eq-L46-Br1} we used \eqref{eq-L46-Br}, and to obtain \eqref{eq-L46-Brl} we relied on the way the constant $l$ was chosen.

To prove that the set $B_r \cap \{v: f(v)>l\}$ has measure that can be bounded from below in terms of $M_1, E_0, H_0$, we use the entropy. Namely, define $T>0$ so that 
\begin{align}\label{eq-L46-T}
T |B_r \cap \{v: f(v)\leq l\}| = \frac{M_1}{8}.
\end{align}
Then, since $l<T$,  we have that 
\begin{align}
\int_{B_r \cap \{v: f(v)> T\}} f(v) dv
& = \int_{B_r \cap \{v: f(v)> l\}} f(v) dv - \int_{B_r \cap \{v: T \geq f(v)> l\}} f(v) dv \nonumber \\
& \geq \frac{M_1}{4} - T |B_r \cap \{v: T\geq f(v)> l\}| \nonumber \\
& \geq \frac{M_1}{4} - T |B_r \cap \{v: f(v)> l\}| \nonumber \\
& = \frac{M_1}{4} - \frac{M_1}{8} = \frac{M_1}{8}, \label{eq-L46-BrT}
\end{align}
where the last line is obtained via \eqref{eq-L46-T}. Now, by the entropy assumption in \eqref{meh1}, we have
\begin{align*}
H_0 & \geq \int_{B_r \cap \{v: f(v)> T\}} f(v) \log f(v) dv \\
& \geq \log T \int_{B_r \cap \{v: f(v)> T\}} f(v) dv \\
& \geq \frac{M_1 \log T}{8}.
\end{align*}

Therefore, $T \leq e^{8H_0/M_1}$, and so by $\eqref{eq-L46-T}$ we obtained the desired lower bound 
\begin{align*}
|B_r \cap \{v: f(v)> l\}| = \frac{M_1}{8T} \geq \frac{M_1}{8}e^{-8H_0/M_1}.
\end{align*}

\end{proof}

We are now in a position to prove Lemma \ref{si2}.

\begin{proof}[Proof of Lemma \ref{si2}]

Since $f$ is assumed to satisfy \eqref{meh2}, Lemma \ref{L46} may be applied. Therefore, there exist $l, m, r>0$ such that
$$ |B_r \cap \{v: f(v)> l\}| \geq m. $$

Let us denote such set by $S$, that is, let
\begin{align}\label{S}
S:= B_r \cap \{v: f(v)> l\}.
\end{align}
Then immediately, we have
\begin{align}\label{S>m}
|S| \geq m.
\end{align}

The set $S$ is easily seen to satisfy $f(v) \geq l \chi_S(v)$. Hence,
\begin{align} \label{L48-1}
K_f(v,v') &\geq l K_{\chi_S}(v,v') \nonumber \\
&\geq c l \left( \int_{ \{ w: w\cdot(v'-v) =0\} } \chi_S(v+w) |w|^{1+\gamma+\nu} dw\right) |v'-v|^{-d-\nu},
\end{align}
where the last inequality follows from \eqref{lm-si1}.

The proof proceeds by considering the case of small velocities and large velocities  $|v|$ separately. Fix some $R>>r$. Consider the following two cases:

{\it Case 1:} $|v|\le R$. In this case, the vectors $w$ for which $\chi_S(v+w) \neq 0$ satisfy 
$$ v+w \in S \subset B_r.$$
Thus,
$$ |w| \leq r + |v| \leq r+R.$$
This implies the following uniform bound
\begin{align}\label{L48-2}
\Pi_v(\sigma) := \int_{ \{ w: w\cdot(v'-v) =0\} } \chi_S(v+w) |w|^{1+\gamma+\nu} dw 
\leq (r+R)^{1+\gamma+\nu} |Br|.
\end{align}
On the other hand, integrating the hyperplane integral $\Pi(v)$ in over the unit sphere, and applying the change of variables Lemma \ref{lm-change} yields
\begin{align}\label{L48-3}
\int_{\sigma \in \partial B_1} \Pi_v(\sigma)  d\sigma 
= \int_{\R^d} \chi_S(z) |z-v|^{\gamma+\nu} dz \geq \kappa >0,
\end{align}
where the last two inequalities are a consequence of \eqref{S>m} by which the set $S$ has positive measure. 

The uniform bound \eqref{L48-2} implies that the integrand $\Pi_v(\sigma)$ in \eqref{L48-3} cannot concentrate mass in sets of measure zero. In other words, it is not possible that $|\{\sigma: \Pi_v(\sigma) >\lambda \}| =0 $ for all $\lambda>0$. Therefore, there exists $\lambda_1>0$ such that $\Pi_v(\sigma) \leq \lambda_1$ on a set of positive measure. That is, there exists a subset $A \subset \partial B_1$ of positive measure such that 
$$ 
\int_{\{w: w\cdot \sigma =0\} } \chi_S(v+w) |w|^{1+\gamma+\nu} dw \geq \lambda_1, \qquad \mbox{if} \;\;\; \sigma \in A.
$$
This set $A$ is symmetric since $\sigma$ and $-\sigma$ define the same hyperplane $\{w: w\cdot \sigma =0\}$. In addition, since it is of positive measure, $|A|>\mu \geq \frac{\mu}{\langle v \rangle}$, for some $\mu >0$. This proves part (i) of Lemma 4.5.

Applying the last inequality to \eqref{L48-1} implies the following lower bounds for any $v'$ for which $\frac{v'-v}{|v'-v|} \in A$:
\begin{align*}
K_f(v',v) \geq C |v'-v|^{-d-\nu} \geq \tilde{C} \langle v \rangle^{1+\gamma+\nu} |v'-v|^{-d-\nu},
\end{align*}
where the last inequality exploits the fact that $|v| \leq R$. This proves part (ii) of the statements of Lemma 4.5.

Finally, the last statement of the Lemma, statement (iii) is trivial in this case, since
$$
|\sigma \cdot v| \leq |v| \leq R \leq \frac{R^2}{|v|}.
$$

{\it Case 2:} $|v|>R$. By the choice of $R$, $|v|>r$. Therefore, since $S \subset B_r$, we have that $v \notin S$, i.e. $0 \notin S-v$.

\begin{tikzpicture}[scale=0.45, baseline=(current bounding box.north)]
\centering
\begin{scope}
	\fill[fill={gray!20}] (0,0) -- (5.5,4.9) arc (45:81:7.28) -- (0,0);
	\fill[fill={gray!20}] (0,0) -- (-5.5,-4.9) arc (225:261:7.28) -- (0,0);
	
	\fill[fill={gray!80}] (0,0) -- (5,5.4) arc (50:59:7.3) -- (0,0);
	\fill[fill={gray!80}] (0,0) -- (-5,-5.4) arc (230:239:7.3) -- (0,0);

	\filldraw[fill = {gray!20}] (-10,6) circle (3.5);
	\node at (-14.5, 7.5) {$B_r (- v)$};

   	\draw (0,0) circle (4);
	\filldraw (0,0) circle (2pt);
	\node at (0.5, 0) {0};
	\node at (5.5, 0) {$\partial B_1(0)$};

	\draw [->] (0,0) -- (-10,6);
	\filldraw (-10, 6) circle (2pt);
	\node at (-10, 5.5) {$-v$};

	\draw [dotted] (0,0) -- (-7.9, 9);
	\draw [dotted] (0,0) -- (-11.5, 2.7);
	\draw [dotted] (-9,-8) -- (9,8);
	\draw [dotted] (-2.2, -10.3) -- (2.2,10.3);

	\filldraw [fill={gray!80}] (-8.5,6) .. controls (-8.5,7) .. (-10,8)
						(-10,8) .. controls (-7,7) .. (-8.5,6);

	\draw [dashed] (0,0) -- (-9.3, 8.7);
	\draw [dashed] (0,0) -- (-10,7);
	\draw [dashed] (-5.6, -6) -- (5.6, 6);
	\draw [dashed] (-4.7,-6.9) -- (4.7, 6.9);
	
	\draw [ultra thick] (2.7, 2.9) arc (48:57:4);
	\draw [ultra thick] (-2.7, -2.9) arc (228:237:4);

	\draw [->, thick] (4,-5) .. controls (2,0) .. (2.4, 3);
	\draw [->, thick] (3.6, -5) .. controls (0, -2) .. (-2.4, -3);
	\node at (4, -8) [text width = 3cm] {The set A(v) on the unit sphere. The darker cone that contains this set is where $K_f$ is bounded below.};

	\draw[->, thick] (-2, 7) .. controls (-4, 9) .. (-9, 7.8);
	\node at (-2.5, 6) [text width = 2.5cm] {The set S shifted by -v (i.e. S-v).};
	
	\draw [thick] (-5.5, -9.5) -- (5.5,9.5);
	\node at (6, 9.5) {$v^\perp$};

\end{scope}
\centering
\end{tikzpicture}

To obtain a lower bound on \eqref{L48-1}, we need to find a set of directions $\sigma$ of the vector $v'-v$ for which the set of $w$'s that are orthogonal to $\sigma$ and satisfy $\chi_S(v+w) \neq 0$ is of positive measure.

Let $\delta >0$, to be chosen later. Define
\begin{align}\label{A(v)}
A(v) :=  \left\{ \sigma \in \partial B_1: | \{\omega: \omega \cdot \sigma =0 \} \cap \{S-v \} | > \delta \right\}.
\end{align}

Then, we claim that
\begin{align} \label{lower A}
 |A(v)| > \frac{c \mu}{|v|},
 \end{align}
which would prove part (i) of the lemma. To prove \eqref{lower A}, recall that by \eqref{S>m} and by Lemma \ref{lm-change} we have
\begin{align}\label{|S|}
m \leq |S| &= |S-v| = \int_{\R^d} \chi_{S-v}(y) \;|y| \; \frac{dy}{|y|} \nonumber\\
& = \int_{\partial B_1} \int_{\{\omega: \omega \cdot \sigma =0 \}} \chi_{S-v}(w) \; |w| \;dw \; d \sigma \nonumber\\
& =  \int_{\partial B_1} \int_{\{\omega: \omega \cdot \sigma =0 \}} \chi_{S}(v+w) \; |w| \;dw \; d \sigma \nonumber\\
& =  \int_{D} \int_{\{\omega: \omega \cdot \sigma =0 \}} \chi_{S}(v+w) \; |w| \;dw \; d \sigma,
\end{align}
where $D$ is the portion of the unit sphere $\partial B_1(0)$ around the direction $v^\perp$ which contains directions $\sigma$ so that $\sigma^\perp$ intersects the ball $B_r(-v)$. In the graph above, $D$ is the portion of $\partial B_1(0)$ inside the light grey cone. The notation we use here is the following: if $x$ is a vector in $\R^d$, then $x^\perp$ is the hyperplane passing through the origin which is orthogonal to $x$. Using similarity of triangles, it can be concluded that 
\begin{align} \label{band}
|D| \leq \frac{C}{|v|}.
\end{align}
Next, we proceed with the estimate \eqref{band} by splitting the integral in cases $\sigma \in A(v)$ and $\sigma \in D\setminus A(v)$, and using that for $v+w \in S$ we have as before $|w|\leq r+|v|$:
\begin{align}
m & \leq \int_{A(v)} \int_{\{\omega: \omega \cdot \sigma =0 \}} \chi_{S}(v+w) \; |w| \;dw \; d \sigma 
+ \int_{D\setminus A(v)} \int_{\{\omega: \omega \cdot \sigma =0 \}} \chi_{S}(v+w) \; |w| \;dw \; d \sigma \nonumber  \nonumber\\
& \leq c_d  \;(r+|v|)\;|A(v)|  \;+ \;  c_d \; \delta \; \;(r+|v|) \; |D\setminus A(v)| \nonumber\\
& \leq 2 \; c_d \;|v| \; \left(A(v) + \delta \frac{C}{|v|} \right)  \nonumber
\end{align}
where $\delta$ comes from \eqref{A(v)}. Therefore,
\begin{align}
|A(v)| \geq \frac{m}{2\;  c_d \; |v|} - \frac{\delta C}{|v|}.
\end{align}
By choosing $\delta \leq \frac{m}{4\; c_d}$ and $\mu \leq m$, we have
$$
|A(v)| \geq \frac{\mu}{4 c_d |v|},
$$
which indeed proves part (i) of the statement of the Lemma 4.5.

To prove part (ii), note that since $|v|$ is large, $|w| \sim |v|$ whenever $v+w \in S$. Therefore, by \eqref{L48-1} we have that for any $v'$ for which $\frac{v'-v}{|v'-v|} \in A$
\begin{align*}
K_f(v',v) &\geq C \langle v \rangle^{1+\gamma+\nu} |v'-v|^{-d-\nu}
\int_{ \{ w: w\cdot(v'-v) =0\} } \chi_S(v+w)  dw \\
&\geq \delta C \langle v \rangle^{1+\gamma+\nu} |v'-v|^{-d-\nu}
\end{align*}
which proves part (ii) of the lemma. 

Finally, part (iii) follows from the fact that the band $D$ was of width at most $\frac{C}{|v|}$. Therefore for any $\sigma \in A(v) \subset D$, we have
$$
|\sigma \cdot v| \leq \cos(\sigma, v) |; |v| \leq \frac{C}{|v|} |v| \leq C.
$$

\end{proof}

Finally, we provide the proof of Lemma \ref{si3}.

\begin{proof}[Proof of Lemma \ref{si3}]
Let 
$$
G:= \{ v' \in \cC: f(v') \leq \frac{m}{2}\}.
$$
Then one can easily observe the following upper bound on the measure of its complement:
\begin{align}\label{upper}
|\cC \setminus G| & = \{ v' \in \cC: f(v') > \frac{m}{2}\} \nonumber \\
& \leq \frac{2}{m} \int_{\cC} |f(v')| dv'.
\end{align}

Since for every $v' \in G$ we have $(m-f(v'))\geq m/2$, we conclude that
\begin{align}\label{L47-G}
\int_{\cC} (m-f(v')) |v'-v|^{-d-\nu} dv' \geq \frac{m}{2} \int_G |v'-v|^{-d-\nu} dv'.
\end{align}
To find the lower bound on the last integral, we remark two properties. One, the complement of its domain has an upper bound \eqref{upper}. Two, the values of $|v'-v|^{-d-\nu}$ are smaller the further away $v'$ is from the center $v$ of the cone $\cC$. Therefore, the smallest possible value of the integral on the right-hand side of \eqref{L47-G} is achieved when set $G$ is as far away from the center of the cone $\cC$, and when its complement has measure $\frac{2}{m} \int_{\cC} |f(v')| dv'$ from \eqref{upper}. In other words, the lowest value is for $G= \cC \setminus B_r$, where $r$ is chosen so that
\begin{align}
|\cC\cap B_r| = \frac{2}{m} \int_{\cC} |f(v')| dv'.
\end{align}
Such $r$ then satisfies
\begin{align}
r = \left(\frac{2d}{|A|m} \int_{\cC} |f(v')| dv' \right)^{1/d}
\end{align}
Continuing the estimate \eqref{L47-G}, we now have
\begin{align}
\int_{\cC} (m-f(v')) |v'-v|^{-d-\nu} dv' 
& \geq \frac{m}{2} \int_{\cC \setminus B_r} |v'-v|^{-d-\nu} dv' \nonumber \\
& = \frac{m}{2} \int_r^\infty \int_A d\sigma s^{-d-\nu} s^{d-1} ds \nonumber \\
& = \frac{|A| m}{2} \frac{r^{-\nu}}{\nu}\\
& = c_{\nu,d}\frac{m^{1+\nu/d} |A|^{1+\nu/d}}{\left(\int_{\cC} |f(v')| dv' \right)^{\nu/d}}.
\end{align}

\end{proof}

\section{A note on the angular kernel} \label{ap-b}
In this section, we explain why was the angular kernel modified (without changing the value of collision operator $Q(f,f)$) from \eqref{b-pw} to \eqref{eq-b}. Recall the two formulas in question. First one \eqref{b-pw} was inspired by the inverse power law 
\begin{equation}
b(\cos{\theta}) \approx  (\sin{\theta})^{-(d-1) - \nu},  \quad \mbox{with} \;\nu \in (0,2).
\end{equation}
But, then it was modified on half of its domain to the following form also written in \eqref{eq-b} 
\begin{align}
b(\theta) \approx 
\begin{cases}
\left(\sin{\theta}\right)^{-d+1-\nu}, & \mbox{if} \; \cos{\theta} \geq 0 \\
\left(\sin{\theta}\right)^{1+\gamma+\nu}, & \mbox{if} \; \cos{\theta} < 0.
\end{cases}
\end{align}
This modification is made so that the kernel $K_f$ in the Carleman representation coincides for the above cases, 
$\cos{\theta} \geq 0$ and $\cos{\theta} < 0$. To better understand this choice, consider setting "arbitrary" powers
\begin{align}
b(\theta) \approx 
\begin{cases}
\left(\sin{\theta}\right)^{\tau_+}, & \mbox{if} \; \cos{\theta} \geq 0 \\
\left(\sin{\theta}\right)^{\tau_-}, & \mbox{if} \; \cos{\theta} < 0,
\end{cases}
\end{align}
and let us see what is the behavior of the kernel $K_f$ in the Carleman representation under this assumption.
Recall, the Carleman representation from \eqref{Carl} 
\begin{align}\label{Carl}
\int_{\R^d} \int_{\mathcal{S}^{d-1}} H(v,v') \; f(v'_*) \;  B(r, \theta) \; d\sigma dv_* 
\; = \;  \int_{\R^d} H(v,v') \; K _f(v,v') \; dv',
\end{align}
where the kernel $K_f(v,v')$ is given by
\begin{align}\label{kf}
K_f(v,v') \; = \; \frac{2^{d-1}}{|v'-v|} \; \int_{\{w: w\cdot (v'-v) = 0\}} f(v+w) \; B(r,\theta) \; r^{-d+2} \; dw.
\end{align}
In the new set of variables $(v,v',w)$, we have
\begin{align*}
r = \sqrt{|v'-v|^2 + |w|^2}, \; \; \cos{\frac{\theta}{2}} = \frac{|w|}{r}, \\
v'_* = v + w, \; \; v_* = v' +w.
\end{align*}
The key idea is to notice that
\begin{align}
\begin{cases}
|w| \approx r, & \mbox{if} \; \cos\theta \geq 0 \\
|v'-v| \approx r, & \mbox{if} \; \cos\theta <0.
\end{cases}
\end{align}

Therefore, the collisional kernel $B(r,\theta)$ behaves differently in terms of $|w|$ and $|v'-v|$ when cosine is positive and when cosine is negative. To be more precise,
\begin{itemize}
\item {\bf When $\theta$ is such that $\boldsymbol\cos\theta \ge 0$,} then
\begin{align}
B(r,\theta) \;  r^{-d+2}\; 
&=\;  r^{-d+2+\gamma} \; (\sin\theta)^{\tau_+} \\ \nonumber
& =  r^{-d+2+\gamma} \; \frac{|v'-v|^{\tau_+}}{r^{\tau_+}} \; \frac{|w|^{\tau_+}}{r^{\tau_+}} \\ \nonumber
& \approx  r^{-d+2+\gamma} \; \frac{|v'-v|^{\tau_+}}{r^{\tau_+}} \\ \nonumber
& \approx |w|^{-d+2+\gamma-\tau_+} \; |v'-v|^{\tau_+}.
\end{align}

\item {\bf When $\theta$ is such that $\boldsymbol\cos\theta <0$,} then
\begin{align}
B(r,\theta) \;  r^{-d+2}\; 
&=\;  r^{-d+2+\gamma} \; (\sin\theta)^{\tau_-} \\ \nonumber
& =  r^{-d+2+\gamma} \; \frac{|v'-v|^{\tau_-}}{r^{\tau_-}} \; \frac{|w|^{\tau_-}}{r^{\tau_-}} \\ \nonumber
& \approx  r^{-d+2+\gamma} \; \frac{|w|^{\tau_-}}{r^{\tau_-}} \\ \nonumber
& \approx |v'-v|^{-d+2+\gamma-\tau_-} \; |w|^{\tau_-}.
\end{align}
\end{itemize}
Hence, $B(r,\theta) \;  r^{-d+2}$ has the same behavior in both cases provided that
\begin{align}
-d+2+\gamma = \tau_+ + \tau_-.
\end{align}
Therefore, if $\tau_+ = -d+1+\nu$, which corresponds to the inverse power law \eqref{ipl}, 
then
\begin{align}\label{eq-two taus}
\tau_- = -d +2 +\gamma -(-d+1+\nu) = 1+\gamma+\nu,
\end{align}
which explains the choice in \eqref{eq-b} of the power for angles with negative cosine.

\bibliographystyle{abbrv}
\bibliography{Majabib}

\end{document}